\documentclass[11pt]{article}

\usepackage[T1]{fontenc}
\usepackage[a4paper,margin=29mm]{geometry}
\usepackage{amsmath,amssymb,amsthm}
\usepackage[hidelinks]{hyperref}

\allowdisplaybreaks

\newtheorem{theorem}{Theorem}[section]
\newtheorem{lemma}[theorem]{Lemma}
\newtheorem{proposition}[theorem]{Proposition}

\theoremstyle{definition}

\theoremstyle{remark}

\newcommand{\Z}{\mathbb Z}
\newcommand{\F}{\mathbb F}
\newcommand{\ICG}{\operatorname{ICG}}
\newcommand{\Spec}{\operatorname{Spec}}
\newcommand{\Div}{\operatorname{Div}}
\newcommand{\supp}{\operatorname{supp}}
\newcommand{\M}{\mathsf M}
\newcommand{\atom}[1]{\left[#1\right]}
\newcommand{\eps}{\varepsilon}
\newcommand{\Q}{\mathcal Q}

\title{So's Conjecture for Integral Circulant Graphs\\
of Order \(p^a q\)}
\author{Jianwei Jiang \qquad Chunhua Yang\thanks{Corresponding author.}\\[0.5ex]
\small School of Mathematics and Statistics, Weifang University, Weifang, Shandong, China\\
\small \texttt{jianweijiangwf@sina.com} \qquad \texttt{chunhuayangwf@sina.com}}
\date{}

\begin{document}

\maketitle

\begin{abstract}
So conjectured that, for fixed \(n\), the ordinary adjacency spectrum of
\(\ICG(n,\mathcal D)\) determines its divisor set \(\mathcal D\). We prove this
for \(n=p^aq\), where \(p<q\) are primes and \(a\geq1\). To handle coincident
eigenvalues from distinct gcd classes, we encode the ordinary spectrum by a
spectral counting measure. For connected \(G=\ICG(p^aq,\mathcal D)\), an exact
identity recovers \(T=\mathcal D\cap\{1,q\}\) and the counting measure of
\(H=\ICG(p^{a-1}q,\mathcal E)\), where
\(\mathcal D=p\mathcal E\mathbin{\dot\cup}T\). Strong induction and the component
decomposition then recover \(\mathcal D\), including when \(p=2\) and when
\(G\) is disconnected.
\end{abstract}

\noindent\textbf{Keywords.}
Spectral graph theory; isospectrality; Ramanujan sums; spectral counting measure;
gcd classes.

\medskip
\noindent\textbf{2020 Mathematics Subject Classification.}
05C50, 05C25, 11L03.

\section{Introduction}

For a positive integer \(n\), So's characterization \cite{So2006}
identifies integral circulant graphs with subsets of the proper divisors
of \(n\). More precisely, if
\(\mathcal D\subseteq\Div^*(n)\), where \(\Div^*(n)\) denotes the set of
proper divisors of \(n\), then the integral circulant graph
\(\ICG(n,\mathcal D)\) has vertex set \(\Z/n\Z\), and two vertices are
adjacent precisely when the greatest common divisor of their difference
with \(n\) belongs to \(\mathcal D\). We refer to \(\mathcal D\) as the
divisor set of the graph. So conjectured that, for fixed \(n\), the
ordinary adjacency spectrum as a multiset (hereafter, the ordinary
spectrum) uniquely determines this divisor set.

The distinction between the ordinary spectrum and the spectral vector is essential. By the standard Ramanujan-sum formula for integral circulant graphs \cite{So2006}, the spectral vector \((\lambda_k)_{k\in\Z/n\Z}\) is constant on gcd classes, whereas the ordinary spectrum records only the numerical eigenvalues and their multiplicities. Distinct spectral-vector entries may therefore
coincide numerically, in which case their multiplicities are merged. Thus
the ordinary spectrum does not retain the frequency labels needed to
reconstruct the spectral vector directly. To retain the multiplicity
information through such collisions, we encode the ordinary spectrum by
its spectral counting measure. The splitting and congruence identities used
below are then identities of counting measures, so they remain valid even
when different spectral-vector entries coincide. This keeps the recovery
argument entirely at the ordinary-spectrum level, without assuming access
to the lost gcd-class labels.

Along the family \(p^aq\), earlier ordinary-spectrum results cover
\(pq\) \cite{So2006}, \(p^2q\) \cite{Cusanza2006,MoniusSo2023}, and
\(p^3q\) \cite[Theorem~1.3(c)]{LiLiu2024}. Li and Liu also proved, as
part of a more general separated-prime theorem, the odd family \(p^aq\)
under the additional condition \(p^a<q\)
\cite[Theorem~1.3(a)]{LiLiu2024}. The complementary family \(pq^k\), with
\(p<q\) and \(k\geq1\), was treated in
\cite{Cusanza2006,MoniusSo2023}. The case \(p^2q^2\) is covered by Li and Liu
\cite[Theorem~1.3(d)]{LiLiu2024} and by Zhang
\cite[Theorem~7.4]{Zhang2023}, who also obtained further conditional results
for broader prime-factorization patterns. See also the recent
discussion in \cite{AltinisikAydin2024}.

Related work also concerns the labeled spectral vector and the isomorphism
problem. Sander and
Sander obtained a spectral-vector result for multiplicative divisor sets
\cite[Theorem~1.2]{SanderSander2015}; related structural formulas appear in
\cite{Sander2018}; and Schlage-Puchta established injectivity of the
spectral-vector map and derived further sufficient conditions for So's
conjecture \cite[Corollary~3 and Proposition~4]{SchlagePuchta2021}. Klin and
Kov\'acs settled the corresponding isomorphism question for rational
circulant graphs \cite[Corollaries~11.2--11.3]{KlinKovacs2012}. Thus the issue addressed here is uniqueness from the ordinary spectrum. The result below
removes both the bound on the exponent and the prime-separation condition
and treats the full family \(p^aq\), including \(p=2\).

For a graph \(G\), write \(\Spec(G)\) for its adjacency spectrum as a
multiset.

\begin{theorem}\label{thm:main}
Let \(p<q\) be primes and let \(a\geq1\). If
\(\mathcal D_1,\mathcal D_2\subseteq\Div^*(p^aq)\) and
\[
\Spec\!\left(\ICG(p^aq,\mathcal D_1)\right)
=
\Spec\!\left(\ICG(p^aq,\mathcal D_2)\right),
\]
then \(\mathcal D_1=\mathcal D_2\).
\end{theorem}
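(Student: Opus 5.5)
We argue by strong induction on \(a\), treating the connected and disconnected cases separately, exactly as the abstract indicates. Throughout we encode \(\Spec(G)\) by its spectral counting measure \(\mu_G=\sum_{k\in\Z/n\Z}\delta_{\lambda_k}\). Here \(\lambda_k=\sum_{d\in\mathcal D}c_{n/d}(k)\) is given by Ramanujan sums. Since \(\lambda_k\) depends only on \(\gcd(k,n)\), we have \(\mu_G=\sum_{e\mid n}\varphi(n/e)\,\delta_{\lambda(e)}\). Every identity below is an identity of such measures, so coincident eigenvalues from different gcd classes cause no trouble. For the base case we need the analogous statement for \(n=q\) (and \(n=p^{a}\)), which is immediate: for \(n=q\) the only proper divisor is \(1\), and the degree \(|\mathcal D|\)-dependent top eigenvalue decides it.

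\textbf{Disconnected case.} The graph \(G=\ICG(n,\mathcal D)\) is connected iff \(\gcd(\mathcal D)=1\). Otherwise, with \(g=\gcd(\mathcal D)\), the graph is \(g\) disjoint copies of \(\ICG(n/g,\mathcal D/g)\). The multiplicity of the largest eigenvalue (the degree) equals the number of components, so \(g\) is determined by \(\Spec(G)\). Then \(\mu_{G}=g\,\mu_{\ICG(n/g,\mathcal D/g)}\), so the spectrum of the quotient graph is determined. Its order \(n/g\) is of the form \(p^{b}q\) with \(b<a\), or \(p^{b}\). Induction, together with the known prime-power case (where the spectrum determines \(\mathcal D\) by a direct triangularity argument on \(\lambda(p^{j})\)), recovers \(\mathcal D/g\) and hence \(\mathcal D\).

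\textbf{Connected case.} Write \(\mathcal D=p\mathcal E\mathbin{\dot\cup}T\) with \(T=\mathcal D\cap\{1,q\}\) and \(\mathcal E\subseteq\Div^*(p^{a-1}q)\). The key step is an exact splitting identity. For \(d=pd'\) and \(n=p\cdot n'\) we have \(c_{n/d}(k)=c_{n'/d'}(k)\), and this depends only on \(k \bmod n'\). Hence \(\lambda_k(G)=\lambda_{k\bmod n'}(H)+\tau_k\), where \(\tau_k=\sum_{t\in T}c_{n/t}(k)\), i.e. \(\tau_k\) is the contribution of \(c_{p^aq}(k)\) and \(c_{p^a}(k)\). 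These terms vanish unless \(p^{a-1}\mid k\), and for such \(k\) they take a few explicit values such as \(\varphi(p^a)\varphi(q)\), \(-p^{a-1}\varphi(q)\), \(\mu(q)\)-type corrections, and so on. This gives
\[
\mu_G=p\,\mu_H+\sum_{k:\,p^{a-1}\mid k}\bigl(\delta_{\lambda_k(H)+\tau_k}-\delta_{\lambda_k(H)}\bigr),
\]
a correction supported on the \(2p\cdot\)-few classes with \(p^{a-1}\mid k\).

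The plan is to read off \(T\) first. There are only four possibilities, and the degree \(\lambda_0=|\mathcal D|\)-weighted sum plus the number of closed walks of length two (\(\sum\lambda^2=n\cdot\deg\)) and perhaps third moments distinguish them once combined with divisibility/congruence information: modulo \(p\) the correction terms are controlled, since \(\mu_G-p\mu_H\) has total mass \(0\) and the congruence identity \(\mu_G\equiv(\text{correction})\pmod p\) on multiplicities. Having \(T\), we know \(\tau_k\), and we solve for \(\mu_H\) from \(\mu_G\). This requires knowing the few values \(\lambda_k(H)\) for \(p^{a-1}\mid k\), i.e. the eigenvalues of \(H\) at gcd classes divisible by \(p^{a-1}\). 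These can be expressed via the degree of \(H\) and simple sums over \(\mathcal E\), and I would recover them by an inductive peeling: multiplicities of \(\mu_G\) that are not divisible by \(p\) must come from the correction. Then induction applied to \(H\) (order \(p^{a-1}q\); when \(a=1\) the order is \(q\)) gives \(\mathcal E\), hence \(\mathcal D\).

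The main obstacle is this recovery of \(T\) and \(\mu_H\) from \(\mu_G\) alone when shifted atoms \(\lambda_k(H)+\tau_k\) collide with other atoms of \(p\mu_H\). This is especially delicate for \(p=2\), where the "multiplicity not divisible by \(p\)" test is weakest. I would first try to handle it by comparing several moments \(\int x^m\,d\mu\) with \(m=1,2,3\), whose values in terms of \(T\) and \(\mathcal E\) are explicit. The aim is to show that the four choices of \(T\) give incompatible moment constraints together with the multiplicity congruences modulo \(p\).
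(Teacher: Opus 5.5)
Your outer architecture matches the paper's. The zero spectrum gives $\mathcal D=\varnothing$. The multiplicity of the degree gives $g=\gcd(\mathcal D)$ and reduces the disconnected case to smaller orders. In the connected case, the split $\mathcal D=p\mathcal E\mathbin{\dot\cup}T$ yields the exact identity $\mu_G=p\mu_H+(\text{correction on }p^{a-1}\mid k)$, which you state correctly, and induction is then applied to $H$.

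The gap is that the step carrying essentially all the content --- recovering $T$ and the \emph{exact} measure $\mu_H$ from $\mu_G$ for connected $G$ --- is only announced, and the tools you name do not suffice. Let $\Delta$ be the degree and let $\eps_0,\eps_1$ indicate $1\in T$, $q\in T$. Put $(c,d)=(\eps_1-\eps_0,\ \eps_1+(q-1)\eps_0)$ and $y_1=\Delta-p^ad$. Let $y_0$ and $x_0=y_0+p^ac$ be the eigenvalues of $G$ on the gcd classes $(v_p,v_q)=(a-1,0)$ and $(a,0)$. Then the correction is
$(q-1)\bigl((p-1)\delta_{y_0}+\delta_{x_0}-p\,\delta_{y_0+p^{a-1}c}\bigr)+(p-1)\delta_{y_1}+\delta_{\Delta}-p\,\delta_{y_1+p^{a-1}d}$.
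So once $T$ is known, the only unknown is $y_0$, equivalently the eigenvalue of $H$ on its class $(a-1,0)$. Writing it as a sum over $\mathcal E$ does not help, since $\mathcal E$ is what you are after. Modulo $p$ the identity reads $\mu_G\equiv\delta_\Delta-\delta_{y_1}+(q-1)(\delta_{x_0}-\delta_{y_0})$. Whenever $p\mid q-1$ (always for $p=2$, and also for odd pairs such as $(p,q)=(3,7)$), the $y_0$-part vanishes identically. Your test ``multiplicities not divisible by $p$ come from the correction'' then gives no information on $y_0$ at all, not merely weak information.

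The paper gets around this by an exact descent. Let $\mu_r$ be the measure of $\ICG(p^rq,\mathcal D_r)$, where $\mathcal D_r=p\mathcal D_{r-1}\mathbin{\dot\cup}T_r$. At each level the paper divides the identity by $p$, reads $T_r$ and the next degree $\Delta_{r-1}$ off the congruence $\mu_r\equiv\delta_{\Delta_r}-\delta_{\eta_r}\pmod p$ with $\eta_r=\Delta_r-p^rd_r$, and strips that layer. If $s=v_p(q-1)<a$, then after $s$ divisions the term $w(\delta_{x_0}-\delta_{y_0})$ becomes visible at level $m=a-s$. Even then, when $x_0\equiv y_0\equiv\Delta_m\pmod{p^m}$, it can overlap with $\delta_{\Delta_m}-\delta_{\eta_m}$. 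This is resolved by a rigidity lemma on prefix sums: an interval of length $|d_1-d_2|\in\{1,LW-1,LW,LW+1\}$ is compared with the symmetric difference of two length-$L$ intervals, using the parities of $L=p^s$ and $W=w$. For odd $p$ with $s\ge a$ the paper imports the Li--Liu theorem, which applies because $p^a<q$. For $p=2$ with $s\ge a$ it descends all the way to order $q$.

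Recovering $T$ has the same flavour, and moments do not help there either. $\sum\lambda=0$ and $\sum\lambda^2=n\Delta$ carry nothing beyond $\Delta$, and higher moments depend on the unknown $\mathcal E$ as well. You give no argument that they separate the four choices. The paper instead uses the mod-$p$ signature $-\delta_{\Delta-p^ad}+(q-1)(\delta_{x_0}-\delta_{y_0})$. It must then handle the collision case $x_0\equiv y_0\equiv\Delta\pmod{p^a}$ by a prefix-sum comparison of the three candidate signatures. As written, your proposal reduces the theorem to this recovery statement but does not prove the recovery statement itself.

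A minor point: in the prime-power case the ordinary spectrum does not give the labelled values $\lambda(p^j)$. It is the degree $\sum_{d\in\mathcal D}\varphi(p^r/d)$ alone that determines $\mathcal D$, via base-$p$ digits.
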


The central step is Theorem~\ref{thm:w1-recovery}. We separate the part of
the divisor set not divisible by \(p\) by writing
\[
\mathcal D=p\mathcal E\mathbin{\dot\cup}T,
\qquad
T\subseteq\{1,q\},
\qquad
\mathcal E\subseteq\Div^*(p^{a-1}q),
\]
where \(p\mathcal E:=\{pe:e\in\mathcal E\}\), and put
\(H:=\ICG(p^{a-1}q,\mathcal E)\). For connected
\(G=\ICG(p^aq,\mathcal D)\), the ordinary spectrum determines both \(T\)
and the exact spectral counting measure of \(H\), without requiring \(H\)
to be connected or its divisor set to be nonempty. The proof starts from an exact identity relating the counting measures of
\(G\) and \(H\). Depending on \(v_p(q-1)\), we either recover \(\M_H\) directly
from congruences modulo \(p\), invoke the prime-separation result of Li and Liu
to determine the whole divisor set, or repeatedly divide an exact measure
identity by \(p\). The remaining cases are reduced, by taking prefix sums, to
comparisons of finite integer intervals. When \(p=2\), the recursion may
continue to the graph of order \(q\). A strong induction on \(a\), together with the component decomposition
for disconnected graphs, then recovers the full divisor set.

\section{Preliminaries and spectral counting measures}

Write \(\Div^*(n)=\{d\in\Z_{>0}:d\mid n,\ d<n\}\).

For integers \(m\geq1\) and \(k\), the Ramanujan sum is defined by
\[
c_m(k)=
\sum_{\substack{1\leq x\leq m\\(x,m)=1}}
\exp(2\pi \mathrm{i}xk/m).
\]
We write \(\varphi\) for Euler's totient function and, for a prime
\(\ell\), write \(\F_\ell=\Z/\ell\Z\). For standard properties of
Ramanujan sums used below, see, for example, Apostol
\cite[Secs.~8.3--8.4]{Apostol1976}. If \(\ell\) is prime, then for
\(r\geq1\) and \(v\geq0\),
\begin{equation}\label{eq:gen-prime-power-ramanujan}
c_{\ell^r}(\ell^v)=
\begin{cases}
0,&0\leq v\leq r-2,\\
-\ell^{r-1},&v=r-1,\\
\ell^{r-1}(\ell-1),&v\geq r,
\end{cases}
\qquad c_1(k)=1.
\end{equation}

For primes \(p<q\) and integers \(a,b\geq1\), let \(n=p^aq^b\) and
let \(\mathcal D\subseteq\Div^*(n)\). Write \(v_p\) and \(v_q\) for
the corresponding \(p\)-adic and \(q\)-adic valuations. For a frequency
\(k\in\Z/n\Z\), put \(u=v_p(\gcd(k,n))\) and
\(v=v_q(\gcd(k,n))\).
Summing So's formula \cite[Theorem~5.1]{So2006} over
\(d\in\mathcal D\) gives the adjacency eigenvalue of \(\ICG(n,\mathcal D)\)
indexed by \(k\in\Z/n\Z\):
\begin{equation*}
\lambda_k(\mathcal D)
=
\sum_{d\in\mathcal D}c_{n/d}(k)
=
\sum_{p^iq^j\in\mathcal D}
c_{p^{a-i}}(p^u)c_{q^{b-j}}(q^v).
\end{equation*}
The second equality follows from the multiplicativity of Ramanujan sums
for coprime moduli together with \eqref{eq:gen-prime-power-ramanujan},
which shows that each prime-power factor depends only on the corresponding
valuation. In particular, \(\lambda_k(\mathcal D)\) depends only on
\(\gcd(k,n)\) and is integral.

Since the ordinary spectrum does not retain the gcd labels, numerically
equal spectral-vector entries contribute only their combined multiplicity.
We therefore encode the ordinary spectrum by its spectral counting measure.
Let \(\mathcal A=\Z[\Z]\) be the free abelian group with basis
\(\{\atom z:z\in\Z\}\). For an integral circulant graph \(G\), define
\begin{equation*}
\M_G=\sum_{z\in\Z}m_G(z)\atom z,
\end{equation*}
where \(m_G(z)\) is the multiplicity of \(z\) in the ordinary spectrum of
\(G\). Congruences modulo a prime are taken coefficientwise; the integers \(z\)
indexing the basis elements \(\atom z\) are unchanged. Division is also
coefficientwise and is used only after exact divisibility in \(\mathcal A\)
has been established.

\begin{lemma}\label{lem:gen-components}
Let \(\varnothing\neq\mathcal D\subseteq\Div^*(n)\), put
\(g=\gcd(\mathcal D)\), and define
\(\mathcal D/g:=\{d/g:d\in\mathcal D\}\). Then
\(\ICG(n,\mathcal D)\) is the disjoint union of \(g\) isomorphic
connected components, each isomorphic to \(\ICG(n/g,\mathcal D/g)\).
Moreover, the ordinary spectrum determines
\(g\) and hence the ordinary spectrum of one component.
\end{lemma}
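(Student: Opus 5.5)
The argument has two parts: a structural one, identifying the components and their number, and a spectral one, reading off \(g\) from the spectrum.

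For the structural part, let \(S=\{x\in\Z/n\Z:\gcd(x,n)\in\mathcal D\}\) be the connection set, so \(\ICG(n,\mathcal D)=\operatorname{Cay}(\Z/n\Z,S)\). Since \(g\mid d\) for every \(d\in\mathcal D\) and \(g\mid n\), every element of \(S\) lies in the subgroup \(g\Z/n\Z\). Hence each connected component lies inside a coset of \(g\Z/n\Z\), and the \(g\) cosets are permuted transitively by translations, which are automorphisms. So it suffices to show that the subgraph induced on the coset \(g\Z/n\Z\) is connected and isomorphic to \(\ICG(n/g,\mathcal D/g)\).

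The map \(\psi:\Z/(n/g)\Z\to g\Z/n\Z\), \(y\mapsto gy\), is a group isomorphism. It satisfies \(\gcd(gy,n)=g\gcd(y,n/g)\). Therefore \(gy\in S\) if and only if \(\gcd(y,n/g)\in\mathcal D/g\), so \(\psi\) is a graph isomorphism from \(\ICG(n/g,\mathcal D/g)\) onto the induced subgraph. Note that \(\mathcal D/g\subseteq\Div^*(n/g)\), since \(d<n\) gives \(d/g<n/g\).

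For connectivity we use \(\gcd(\mathcal D/g)=1\). The subgroup of \(\Z/(n/g)\Z\) generated by the connection set contains the elements \(d/g\) for \(d\in\mathcal D\). By Bézout, a \(\Z\)-combination of these equals \(1\) modulo \(n/g\), so the generated subgroup is everything. A Cayley graph whose connection set generates the group is connected. This establishes the first assertion.

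For the spectral part, the graph is regular of degree \(\lambda_0(\mathcal D)=|S|\), since \(c_{n/d}(0)=\varphi(n/d)\) and \(|S|=\sum_{d\in\mathcal D}\varphi(n/d)\). Its degree equals the largest eigenvalue \(\max\Spec\). For a regular graph, the multiplicity of the degree as an eigenvalue equals the number of connected components; this is the standard Perron–Frobenius fact. Hence \(g=m_G(\max\Spec(G))\) is read off from the ordinary spectrum. Because the graph is a disjoint union of \(g\) isomorphic copies of a component \(C\), we have \(\M_G=g\,\M_C\). Dividing coefficientwise, which is exact since every multiplicity of \(G\) is \(g\) times that of \(C\), recovers \(\M_C\) and hence \(\Spec(C)\).

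None of these steps is hard. The point needing the most care is the connectivity claim, namely that the Bézout combination is realized inside the Cayley graph. This holds because \(S\) is closed under negation and contains representatives \(d\) with \(\gcd(d,n)=d\) for each \(d\in\mathcal D\). The other point to check is that the degree is indeed the maximum eigenvalue with multiplicity equal to the number of components, which holds because \(\mathcal D\neq\varnothing\) makes the degree positive. In fact the multiplicity claim holds for any regular graph.
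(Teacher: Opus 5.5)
Your proposal is correct and follows the paper's proof. The spectral part is exactly the paper's argument: the degree is the largest eigenvalue, its multiplicity equals the number \(g\) of components, and dividing every multiplicity by \(g\) gives the spectrum of one component. The only difference is that you prove the component criterion yourself, using the cosets of \(g\Z/n\Z\), the isomorphism \(y\mapsto gy\) via \(\gcd(gy,n)=g\gcd(y,n/g)\), and B\'ezout for connectivity, whereas the paper simply cites \cite[Lemma~1.1]{MoniusSo2023} for this step.
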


\begin{proof}
The standard component criterion gives \(g\) connected components; see
\cite[Lemma~1.1]{MoniusSo2023}. Thus each component is naturally isomorphic to \(\ICG(n/g,\mathcal D/g)\). Since each component is connected and regular,
its degree is the largest adjacency eigenvalue, and this eigenvalue is
simple. Hence the largest eigenvalue of \(\ICG(n,\mathcal D)\) is this common
degree and occurs once in each of the \(g\) components, so its multiplicity
is exactly \(g\). Dividing every eigenvalue multiplicity by \(g\) then
recovers the ordinary spectrum of \(\ICG(n/g,\mathcal D/g)\).
\end{proof}

\begin{lemma}\label{lem:gen-prime-power}
Let \(\ell\) be a prime and \(r\geq1\). For every
\(\mathcal D\subseteq\Div^*(\ell^r)\), the ordinary spectrum of
\(\ICG(\ell^r,\mathcal D)\) uniquely determines \(\mathcal D\).
\end{lemma}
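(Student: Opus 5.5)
We argue by induction on \(r\), using the component decomposition of Lemma~\ref{lem:gen-components} to reduce to connected graphs. For \(\mathcal D\subseteq\Div^*(\ell^r)\) write \(\mathcal D=\{\ell^i:i\in I\}\) with \(I\subseteq\{0,\dots,r-1\}\).

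\textbf{Eigenvalues.} By \eqref{eq:gen-prime-power-ramanujan}, the eigenvalue at a frequency \(k\) with \(u=v_\ell(\gcd(k,\ell^r))\) is
\[
\lambda^{(u)}=\sum_{i\in I,\ i\geq r-u}\ell^{r-i-1}(\ell-1)\;-\;[\,r-u-1\in I\,]\,\ell^{u}.
\]
The class \(u\) has \(\varphi(\ell^{r-u})\) frequencies for \(u<r\), and one frequency for \(u=r\).

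\textbf{Base and reduction.} If \(\mathcal D=\varnothing\), the spectrum is \(\{0^{\ell^r}\}\). Every nonempty \(\mathcal D\) has a positive eigenvalue (its degree), so the empty case is recognized. Otherwise Lemma~\ref{lem:gen-components} recovers \(g=\gcd(\mathcal D)=\ell^{\min I}\) as the multiplicity of the largest eigenvalue. It also recovers the spectrum of \(\ICG(\ell^r/g,\mathcal D/g)\). If \(g>1\), the inductive hypothesis at exponent \(r-\min I<r\) recovers \(\mathcal D/g\), and hence \(\mathcal D\). So we may assume \(0=\min I\), i.e.\ \(1\in\mathcal D\) and the graph is connected.

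\textbf{Connected case.} Now \(0\in I\) and \(\mathcal D=\{1\}\cup\ell\mathcal E\) with \(\mathcal E\subseteq\Div^*(\ell^{r-1})\). Let \(H=\ICG(\ell^{r-1},\mathcal E)\).

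\begin{itemize}
\item For \(u=0\): since \(0\in I\) and \(r-i\geq1\), we get \(\lambda^{(0)}=-1\).
\item For \(u\geq1\): the index \(i=0\) satisfies \(0\geq r-u\) only when \(u=r\). Comparing with the formula for \(H\) at frequency class \(u-1\) gives
\[
\lambda^{(u)}(G)=\mu^{(u-1)}(H)\ \ (1\le u\le r-1),\qquad \lambda^{(r)}(G)=\mu^{(r-1)}(H)+\ell^{r-1}(\ell-1).
\]
This is the expected splitting identity.
\end{itemize}

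Class \(u-1\) of \(H\) has \(\varphi(\ell^{r-u})\) frequencies, the same count as class \(u\) of \(G\). Hence
\[
\M_G=\M_H-[\mu_{\max}]+[\mu_{\max}+\ell^{r-1}(\ell-1)]+(\ell^r-\ell^{r-1})[-1],
\]
where \(\mu_{\max}=\mu^{(r-1)}(H)=\deg H\).

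Since \(G\) is connected, its degree \(\deg H+\ell^{r-1}(\ell-1)\) is its simple largest eigenvalue, so it can be read off. Subtracting \(\ell^{r-1}(\ell-1)\) gives \(\deg H\). We can then form
\[
\M_H=\M_G-[\deg G]+[\deg H]-(\ell^r-\ell^{r-1})[-1].
\]
This is a legitimate element of \(\mathcal A\) with nonnegative coefficients, because \(\M_G\) contains \([-1]\) at least that often. Thus \(\Spec(H)\) is determined, and induction at exponent \(r-1\) recovers \(\mathcal E\), hence \(\mathcal D\). For \(r=1\), \(\mathcal E\subseteq\Div^*(1)=\varnothing\) and the argument is trivial.

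\textbf{Main obstacle.} The delicate point is ensuring the reconstruction of \(\M_H\) never requires knowing which eigenvalue came from which gcd class. The identity above is phrased purely at the measure level: it subtracts a known atom count at \(-1\) and moves one atom from \(\deg G\) to \(\deg H\), both determined from \(\M_G\) alone. So coincidences (e.g.\ \(\mu^{(u)}(H)=-1\)) cause no problem. I would double-check that the \(u=0\) class indeed always gives exactly \(-1\), and the count \(\varphi(\ell^r)=\ell^r-\ell^{r-1}\).
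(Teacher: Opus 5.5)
Your connected-case computation is wrong, and the identity you invert there is false in general. By your own eigenvalue formula, at \(u=0\) the positive sum (over \(i\ge r\)) is empty, so \(\lambda^{(0)}=-[\,r-1\in I\,]\). The value \(-1\) on the coprime class comes from \(\ell^{r-1}\in\mathcal D\), not from \(1\in\mathcal D\): for \(r\ge2\) and \(\ell\nmid k\) one has \(c_{\ell^r}(k)=\mu(\ell^r)=0\).

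There are two further errors in the same step. First, removing \(i=0\) and re-indexing \(i\mapsto i-1\), \(r\mapsto r-1\) does not shift the class index. Hence \(\lambda^{(u)}(G)=\mu^{(u)}(H)\) for \(0\le u\le r-2\), not \(\mu^{(u-1)}(H)\). Second, the divisor \(1\) contributes \(c_{\ell^r}(\ell^{r-1})=-\ell^{r-1}\) on the class \(u=r-1\), a term you dropped.

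For \(u\le r-2\), class \(u\) of \(G\) has \(\varphi(\ell^{r-u})=\ell\,\varphi(\ell^{r-1-u})\) elements. The correct identity is therefore the \(b=0\) analogue of Proposition~\ref{prop:gen-measure-split}:
\[
\M_G=\ell\M_H-\ell\atom{\deg H}+(\ell-1)\atom{\deg H-\ell^{r-1}}+\atom{\deg G},
\qquad \deg G=\deg H+(\ell-1)\ell^{r-1}.
\]
For instance, take \(\ell=r=2\) and \(\mathcal D=\{1\}\). Then \(G=C_4\) has spectrum \(\{2,0,0,-2\}\). Your formula, with \(\M_H=2\atom0\), predicts \(\{2,0,-1,-1\}\) instead. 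In fact your identity holds only when \(H\) is complete.

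Your strategy is repairable. With the corrected identity, every correction term is known once \(\deg G\) is read off as the simple top eigenvalue. Then
\(\M_H=\frac1\ell\bigl(\M_G-\atom{\deg G}-(\ell-1)\atom{\deg H-\ell^{r-1}}+\ell\atom{\deg H}\bigr)\)
is an exact coefficientwise division, and your induction goes through.

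All of this machinery is unnecessary, however. The paper cites \cite[Corollary~2.2]{SanderSander2015} and notes that the largest eigenvalue alone determines \(\mathcal D\). Indeed,
\(\deg G=\sum_{i\in I}\varphi(\ell^{r-i})=(\ell-1)\sum_{i\in I}\ell^{r-1-i}\).
So the base-\(\ell\) digits of \(\deg G/(\ell-1)\), all \(0\) or \(1\), read off \(I\) directly. No component decomposition, induction, or measure bookkeeping is needed.
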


\begin{proof}
This follows from \cite[Corollary~2.2]{SanderSander2015}; in fact, for
prime-power order the largest eigenvalue already determines the divisor set.
\end{proof}

\section{General \texorpdfstring{\(p\)}{p}-layer spectral decomposition}

Fix \(n=p^a q^b\), where \(p<q\) are primes and \(a,b\geq1\). For
\(\mathcal D\subseteq\Div^*(n)\), put \(G=\ICG(n,\mathcal D)\). Every such
divisor set has a unique split
\begin{equation*}
 \mathcal D=p\mathcal E\mathbin{\dot\cup}T,
 \qquad T\subseteq\{1,q,\ldots,q^b\},
\end{equation*}
where \(\mathcal E\subseteq\Div^*(p^{a-1}q^b)\). Put
\(H=\ICG(p^{a-1}q^b,\mathcal E)\), \(h=p^{a-1}\), and \(P=p^a\).
Retaining the valuation notation from Section~2, for
\(0\le u\le a\) and \(0\le v\le b\), let \(\lambda_{u,v}\) denote the
common spectral-vector entry of \(G\) on the gcd class corresponding to the
valuation pair \((u,v)\). For \(0\le u\le a-1\) and \(0\le v\le b\), let
\(\mu_{u,v}\) denote the corresponding spectral-vector entry of \(H\).
Thus the two highest \(p\)-layers of \(G\), namely \(u=a-1\) and \(u=a\),
both correspond to the top \(p\)-layer \(u=a-1\) of \(H\). We refer to a gcd class with valuation pair \((u,v)\) simply as the gcd
class \((u,v)\); in \(G\) it has
multiplicity
\(\varphi(p^{a-u})\varphi(q^{b-v})\).
To encode the \(q\)-part of the contribution of \(T\), for
\(0\le j\le b\) let \(\eps_j\in\{0,1\}\) be the indicator of the
condition \(q^j\in T\), and define \(\tau_v=\sum_{j=0}^b \eps_jc_{q^{b-j}}(q^v)\) for
\(0\le v\le b\). Set \(\gamma_v=\mu_{a-1,v}\).

The next two results are stated for arbitrary \(b\); in
Section~\ref{sec:w1} we apply them with \(b=1\).

\begin{proposition}\label{prop:gen-spectral-vector-split}
For \(0\leq u\leq a-2\), \(\lambda_{u,v}=\mu_{u,v}\). At the two highest \(p\)-layers,
\begin{equation*}
 y_v:=\lambda_{a-1,v}=\gamma_v-h\tau_v,
 \qquad
 x_v:=\lambda_{a,v}=\gamma_v+(p-1)h\tau_v.
\end{equation*}
In particular,
\begin{equation}\label{eq:gen-top-difference}
 x_v-y_v=P\tau_v,
 \qquad \gamma_v=y_v+h\tau_v.
\end{equation}
\end{proposition}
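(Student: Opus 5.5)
The plan is to compute both spectral vectors directly from the summed form of So's formula in Section~2 and compare them term by term. Write \(\mathcal D=p\mathcal E\mathbin{\dot\cup}T\). Then
\[
\lambda_{u,v}=\sum_{p^iq^j\in\mathcal D}c_{p^{a-i}}(p^u)c_{q^{b-j}}(q^v)
\]
splits into a sum over \(p\mathcal E\) (the terms with \(i\geq1\)) and a sum over \(T\) (the terms with \(i=0\)). The \(T\)-part factors as \(c_{p^a}(p^u)\,\tau_v\), because every element of \(T\) has \(i=0\) and the \(q\)-factors sum to \(\tau_v\) by definition. For the \(p\mathcal E\)-part, write \(pe\) with \(e=p^{i'}q^j\in\mathcal E\), where \(i'=i-1\). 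The \(p\)-factor is \(c_{p^{a-1-i'}}(p^u)\) and the \(q\)-factor is \(c_{q^{b-j}}(q^v)\). By \eqref{eq:gen-prime-power-ramanujan}, the value \(c_{\ell^r}(\ell^v)\) depends on \(v\) only through \(\min(v,r)\), so this \(p\)-factor is the same for valuation \(u\) and for valuation \(\min(u,a-1)\). Hence the \(p\mathcal E\)-part equals \(\mu_{\min(u,a-1),v}\), the spectral-vector entry of \(H=\ICG(p^{a-1}q^b,\mathcal E)\) on the gcd class \((\min(u,a-1),v)\).

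It remains to evaluate \(c_{p^a}(p^u)\) from \eqref{eq:gen-prime-power-ramanujan} with \(r=a\).
\begin{itemize}
\item For \(u\leq a-2\) it is \(0\), which gives \(\lambda_{u,v}=\mu_{u,v}\).
\item For \(u=a-1\) it is \(-p^{a-1}=-h\), which gives \(y_v=\gamma_v-h\tau_v\).
\item For \(u=a\) it is \(p^{a-1}(p-1)=(p-1)h\), which gives \(x_v=\gamma_v+(p-1)h\tau_v\).
\end{itemize}
Subtracting the second identity from the third gives \(x_v-y_v=ph\tau_v=P\tau_v\), and rearranging the second gives \(\gamma_v=y_v+h\tau_v\).

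The only point needing care is the identification of the \(p\mathcal E\)-part with \(\mu\) at the top layers \(u=a-1\) and \(u=a\). At \(u=a\) the frequency valuation exceeds the largest possible modulus exponent \(a-1-i'\) in \(H\), and we must check that this produces exactly the value \(\mu_{a-1,v}\). This is the \(v\geq r\) saturation case of \eqref{eq:gen-prime-power-ramanujan}. The degenerate case \(a=1\) also needs a check: then \(\mathcal E\subseteq\Div^*(q^b)\) and the \(p\)-factor is \(c_1\equiv1\), so the identification is consistent. Everything else is bookkeeping.
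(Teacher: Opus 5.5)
Your proposal is correct and follows essentially the same route as the paper's proof. You split $\mathcal D=p\mathcal E\mathbin{\dot\cup}T$, identify the $p\mathcal E$-contribution with the corresponding spectral entry of $H$, and evaluate the $T$-contribution $c_{p^a}(p^u)\tau_v$ from the prime-power formula; your $\min(u,a-1)$ saturation remark just makes explicit why both top layers of $G$ see $\gamma_v$.
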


\begin{proof}
We first consider the contribution of \(p\mathcal E\) to the spectral-vector entry \(\lambda_{u,v}\). For \(pe\in p\mathcal E\),
\(p^aq^b/(pe)=p^{a-1}q^b/e\), so the corresponding Ramanujan modulus in
\(G\) is exactly the modulus attached to \(e\) in
\(H=\ICG(p^{a-1}q^b,\mathcal E)\). Hence, for \(0\leq u\leq a-2\), lowering the \(p\)-exponent by one does not change the gcd-class label \((u,v)\), so the contribution of \(p\mathcal E\) to \(\lambda_{u,v}\) is \(\mu_{u,v}\). The two highest \(p\)-layers of \(G\), \(u=a-1\) and \(u=a\), both correspond to the top \(p\)-layer of \(H\), so in both cases the contribution of \(p\mathcal E\) to \(\lambda_{u,v}\) is \(\mu_{a-1,v}=\gamma_v\).

The contribution of \(T\) to the spectral-vector entry \(\lambda_{u,v}\) is \(c_{p^a}(p^u)\tau_v\). By \eqref{eq:gen-prime-power-ramanujan},
\[
c_{p^a}(p^u)=
\begin{cases}
0,&0\leq u\leq a-2,\\
-h,&u=a-1,\\
(p-1)h,&u=a.
\end{cases}
\]
Combining the contributions of \(p\mathcal E\) and \(T\) to \(\lambda_{u,v}\) gives
\[
\lambda_{u,v}=\mu_{u,v}\quad(0\leq u\leq a-2),\qquad
y_v=\gamma_v-h\tau_v,\qquad
x_v=\gamma_v+(p-1)h\tau_v.
\]
The remaining identities follow immediately from these formulas.
\end{proof}

Put
\begin{equation*}
 \omega_v=\varphi(q^{b-v})=
 \begin{cases}
 q^{b-v-1}(q-1),&v<b,\\
 1,&v=b,
 \end{cases}
\end{equation*}
and, for integers \(y,\tau\), define
\begin{equation}\label{eq:gen-Q}
 \Q_\tau(y)=(p-1)\atom y+\atom{y+P\tau}
              -p\atom{y+h\tau}.
\end{equation}

\begin{proposition}
\label{prop:gen-measure-split}
In the free abelian group \(\mathcal A\),
\begin{equation}\label{eq:gen-measure-split}
 \boxed{
 \M_G=p\M_H+\sum_{v=0}^b \omega_v\Q_{\tau_v}(y_v).}
\end{equation}
\end{proposition}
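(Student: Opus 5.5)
The plan is to write both counting measures as sums over gcd classes and compare them class by class, using Proposition~\ref{prop:gen-spectral-vector-split}. Since each gcd class \((u,v)\) of \(G\) contributes \(\varphi(p^{a-u})\varphi(q^{b-v})\) copies of \(\lambda_{u,v}\), we have in \(\mathcal A\)
\[
\M_G=\sum_{u=0}^{a}\sum_{v=0}^{b}\varphi(p^{a-u})\,\omega_v\,\atom{\lambda_{u,v}},
\qquad
\M_H=\sum_{u=0}^{a-1}\sum_{v=0}^{b}\varphi(p^{a-1-u})\,\omega_v\,\atom{\mu_{u,v}}.
\]
This is an identity of counting measures, valid even when different labels coincide numerically, because \(\M\) simply adds the atoms with multiplicity.

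First I will handle the low layers \(0\le u\le a-2\). There \(\lambda_{u,v}=\mu_{u,v}\), and \(\varphi(p^{a-u})=p\,\varphi(p^{a-1-u})\) because \(a-u\ge2\). So these layers of \(\M_G\) equal exactly \(p\) times the corresponding layers of \(\M_H\).

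The remaining part of \(p\M_H\) is the top layer \(u=a-1\) of \(H\), which equals \(p\sum_v\omega_v\atom{\gamma_v}\) since \(\varphi(1)=1\). The remaining part of \(\M_G\) is \(\sum_v\omega_v\bigl((p-1)\atom{y_v}+\atom{x_v}\bigr)\), because \(\varphi(p)=p-1\) and \(\varphi(1)=1\). Using \eqref{eq:gen-top-difference}, I substitute \(x_v=y_v+P\tau_v\) and \(\gamma_v=y_v+h\tau_v\). Then for each \(v\) the difference is
\[
(p-1)\atom{y_v}+\atom{y_v+P\tau_v}-p\atom{y_v+h\tau_v}=\Q_{\tau_v}(y_v),
\]
and summing with weights \(\omega_v\) gives \eqref{eq:gen-measure-split}.

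There is no real obstacle. The only point needing care is the multiplicity bookkeeping at the top two layers of \(G\), which both map onto the single top layer of \(H\): the weights \(p-1\) and \(1\) must add up to \(p\), which is what makes \(\Q_\tau\) have total mass zero. One also has to check the edge case \(a=1\). Then there are no low layers, \(h=1\), and \(H\) has order \(q^b\), with its single \(p\)-layer \(u=0\) playing the role of the top layer. The same computation goes through verbatim.
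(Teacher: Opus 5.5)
Your proof is correct and follows essentially the same route as the paper: the low layers \(0\le u\le a-2\) give exactly \(p\) times the corresponding part of \(\M_H\), since \(\varphi(p^{a-u})=p\,\varphi(p^{a-1-u})\), and for each \(v\) the two top layers of \(G\) minus \(p\omega_v\atom{\gamma_v}\) give \(\omega_v\Q_{\tau_v}(y_v)\). Your explicit check of the edge case \(a=1\) is a small addition that the paper leaves implicit.
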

\begin{proof}
For \(0\leq u\leq a-2\), Proposition~\ref{prop:gen-spectral-vector-split}
gives \(\lambda_{u,v}=\mu_{u,v}\).  The corresponding gcd-class multiplicities in \(G\) and \(H\) are
\(\varphi(p^{a-u})\varphi(q^{b-v})\) and
\(\varphi(p^{a-1-u})\varphi(q^{b-v})\), respectively.  Since
\(\varphi(p^{a-u})=p\,\varphi(p^{a-1-u})\), the gcd classes with
\(0\leq u\leq a-2\) contribute exactly the corresponding part of
\(p\M_H\). It remains to replace the contribution of the top
\(p\)-layer of \(H\). For fixed \(v\), the \(q\)-factor in the
gcd-class multiplicity is \(\omega_v\), so \(p\M_H\) contributes
\(p\omega_v\atom{\gamma_v}\), whereas the two top \(p\)-layers of \(G\)
contribute
\[
(p-1)\omega_v\atom{y_v}+\omega_v\atom{x_v}.
\]
Hence the correction for this fixed \(v\) is
\[
\omega_v\bigl((p-1)\atom{y_v}+\atom{x_v}-p\atom{\gamma_v}\bigr)
=
\omega_v\Q_{\tau_v}(y_v),
\]
where \(x_v=y_v+P\tau_v\) and \(\gamma_v=y_v+h\tau_v\).
Summing over \(v\) gives \eqref{eq:gen-measure-split}.

If several displayed integers are equal, their coefficients are simply added.
\end{proof}

\section{Recovery for \texorpdfstring{\(p^a q\)}{p to the a q}}\label{sec:w1}

We now specialize to \(b=1\). Thus \(T\subseteq\{1,q\}\) and
\((\omega_0,\omega_1)=(q-1,1)\). With \(\eps_0,\eps_1\) as defined in Section~3, put
\(c:=\tau_0=\eps_1-\eps_0\) and
\(d:=\tau_1=\eps_1+(q-1)\eps_0\). The four possible values of \((T,c,d)\) are
\begin{equation}\label{eq:w1-state-table}
\begin{array}{c|rrrr}
T&\varnothing&\{q\}&\{1\}&\{1,q\}\\ \hline
c&0&1&-1&0\\
d&0&1&q-1&q.
\end{array}
\end{equation}

Specializing Proposition~\ref{prop:gen-measure-split} to \(b=1\) gives
\begin{equation}\label{eq:w1-special-split}
\M_G=p\M_H+(q-1)\Q_c(y_0)+\Q_d(y_1).
\end{equation}
For the recovery arguments below, assume that the original graph \(G\) is
connected. Neither \(H\) nor the intermediate graphs need be connected.
Lemma~\ref{lem:w1-large-valuation} also holds without this assumption on \(G\).
The frequency \(k=0\), corresponding to the trivial character, lies in the
gcd class \((a,1)\). Since every
Cayley graph is regular, the corresponding eigenvalue is its degree
\(\Delta\), so \(\lambda_{a,1}=x_1=\Delta\). Since \(G\) is connected, \(\Delta\)
is also the largest adjacency eigenvalue and has multiplicity one. By
\eqref{eq:gen-top-difference},
\begin{equation}\label{eq:w1-known-top-relations}
y_1=\Delta-Pd,\qquad
\gamma_1=y_1+hd=\Delta-(p-1)hd,\qquad
x_0-y_0=Pc.
\end{equation}

Our first task is to recover \(T\) from the ordinary spectrum. Once \(T\)
is known, \(c,d\), and hence \(y_1\), follow from
\eqref{eq:w1-state-table}--\eqref{eq:w1-known-top-relations}. It remains to
recover \(y_0\), or equivalently the pair \((x_0,y_0)\).

\subsection{Recovering \texorpdfstring{\(T\)}{T} and the case \texorpdfstring{\(p\nmid(q-1)\)}{p not dividing q-1}}

\begin{lemma}\label{lem:w1-top-state}
If \(G=\ICG(p^a q,\mathcal D)\) is connected, its ordinary spectrum
uniquely determines \(T=\mathcal D\cap\{1,q\}\).
\end{lemma}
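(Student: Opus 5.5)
The plan is to reduce the measure identity \eqref{eq:w1-special-split} modulo \(p\), which kills the unknown term \(p\M_H\). Since \(\Q_\tau(y)\equiv\atom{y+P\tau}-\atom y\pmod p\), and \(\Q_0(y)=0\) exactly, we get the congruence
\[
\M_G\equiv (q-1)\bigl(\atom{x_0}-\atom{y_0}\bigr)+\atom{\Delta}-\atom{\Delta-Pd}\pmod p,
\]
where \(x_0=y_0+Pc\) and \(\Delta=x_1\) is the degree. The left side \(N:=\M_G \bmod p\) is known from the ordinary spectrum. Since \(G\) is connected, \(\Delta\) is also known: it is the largest eigenvalue, and it is simple. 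The task then reduces to showing that the four rows of table \eqref{eq:w1-state-table} yield pairwise distinguishable \(N\), whatever the unknown integer \(y_0\) is.

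\textbf{Step 1 (\(T=\varnothing\) versus \(T\neq\varnothing\)).} I first observe that \(x_0\neq\Delta\) and \(y_0\neq\Delta\). Indeed, the gcd classes \((a,0)\) and \((a-1,0)\) have positive multiplicities \(q-1\) and \((p-1)(q-1)\), while \(\Delta\) has multiplicity exactly one. Hence the coefficient of \(\atom\Delta\) in \(N\) is \(1\) if \(d\neq0\) and \(0\) if \(d=0\). In the table, \(d=0\) occurs only for \(T=\varnothing\) (where also \(c=0\), so \(N=0\)). So \(T=\varnothing\) is detected, and if \(T\neq\varnothing\) then the coefficient of \(\atom\Delta\) is \(1\not\equiv0\).

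\textbf{Step 2 (case \(p\mid q-1\)).} Now the \(x_0,y_0\) terms vanish, so \(N=\atom\Delta-\atom{\Delta-Pd}\) with \(Pd\neq0\). Thus \(d\) is read off as \((\Delta-z)/P\), where \(z\) is the unique atom with coefficient \(-1\). The three nonzero values \(1,q-1,q\) of \(d\) are distinct, so \(T\) is determined.

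\textbf{Step 3 (case \(p\nmid q-1\)).} This is the main obstacle. Here \(T=\{1,q\}\) gives \(N=\atom\Delta-\atom{\Delta-Pq}\), a two-atom measure. The cases \(T=\{q\}\) (\(c=1,d=1\)) and \(T=\{1\}\) (\(c=-1,d=q-1\)) add the nonzero term \((q-1)(\atom{y_0+Pc}-\atom{y_0})\). This term could partially cancel against \(-\atom{\Delta-Pd}\) when \(y_0+Pc=\Delta-Pd\), or coincide with it when \(y_0=\Delta-Pd\). I would do an explicit finite case analysis of these coincidences. For each candidate \(T\), list the support of \(N\) together with its coefficients modulo \(p\): these are \(1\), \(-1\), \(q-1\), \(-(q-1)\), and possibly \(q-2\) or \(-q\) after merging. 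Then check that no choice of \(y_0\) makes two candidates produce the same \(N\).

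For example, \(\{q\}\) and \(\{1\}\) should be told apart by the offset between the \(\pm(q-1)\)-atoms: this offset is \(+P\) in one case and \(-P\) in the other. \(\{1,q\}\) is separated from both because its \(N\) has total absolute mass \(2\), whereas any surviving \((q-1)\)-terms add mass. The delicate subcase is \(p=2\), where \(q-1\equiv0\), so that \(2\nmid q-1\) is impossible and Step 2 applies. I therefore expect the genuine collision analysis only for odd \(p\) with \(q\not\equiv1\pmod p\).

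If the mod-\(p\) bookkeeping leaves a residual ambiguity, for instance when \(q\equiv2\pmod p\) makes a merged coefficient \(q-2\) vanish, I would break it with exact information. Comparing \(N\) with the actual multiplicity of \(\atom{\Delta-Pd}\) in \(\M_G\) should work, or alternatively using \(\sum_z z\,m_G(z)=0\) (the trace of the adjacency matrix) together with the known degree.
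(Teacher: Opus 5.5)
\textbf{Verdict: genuine gap in Step~3.} Your Steps~1 and~2 are correct and match the paper. You reduce the split identity modulo \(p\), use that \(\Delta\) is simple, and, when \(p\mid q-1\), read \(d\) off the single atom \(-\atom{\Delta-Pd}\). Your remark that \(x_0,y_0\ne\Delta\) is also valid, since the classes \((a,0)\) and \((a-1,0)\) have multiplicity at least \(q-1\ge2\).

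Step~3 is the whole substance of the lemma, and it is not carried out. You announce a finite case analysis but do not perform it, and both separation criteria you state break down exactly when \(\alpha:=q-1\equiv1\pmod p\), i.e.\ \(q\equiv2\pmod p\). Treat \(y_0\) as an unknown integer subject only to \(x_0,y_0\ne\Delta\), which is all you have established. Then:
\(T=\{q\}\) with \(y_0=\Delta-2P\) gives \(N=\atom\Delta-\atom{\Delta-2P}\);
\(T=\{1\}\) with \(y_0=\Delta-P(q-2)\) gives \(N=\atom\Delta-\atom{\Delta-P(q-2)}\).
Both have the same shape (two atoms, coefficients \(\pm1\)) as \(N=\atom\Delta-\atom{\Delta-Pq}\) for \(T=\{1,q\}\). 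So your ``total mass \(2\)'' criterion does not separate \(\{1,q\}\). Likewise, the ``offset \(\pm P\)'' criterion assumes you can tell which atoms carry the \(\pm(q-1)\) coefficients. When \(\pm\alpha=\pm1\), that cannot be read off \(N\).

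Your fallback does not repair this.
\begin{itemize}
\item The trace identity \(\sum_z z\,m_G(z)=0\) is a property of the given spectrum, hence the same for every candidate \(T\). Each \(\Q_\tau(y)\) has total mass \(0\) and first moment \((p-1)y+(y+P\tau)-p(y+h\tau)=0\), so the split identity satisfies it automatically.
\item The exact multiplicity of \(\Delta-Pd\) in \(\M_G\) contains the unknown contribution of \(p\M_H\), so it gives nothing without further work.
\end{itemize}
What actually settles the collapsed cases is the positions of the atoms. The offsets \(2\), \(q-2\), \(q\) are pairwise distinct because \(p\) is odd in this step, so \(q\ge5\).

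To close the gap you need a complete comparison. One route is the paper's:
\begin{itemize}
\item If \(x_0\not\equiv\Delta\pmod P\), restrict \(N-\atom\Delta\) to the residue class of \(\Delta\) modulo \(P\). This isolates \(-\atom{y_1}\), and hence \(d\).
\item Otherwise, rescale indices by \(z\mapsto(z-\Delta)/P\). The three cases become \(-\atom{-1}+\alpha(\atom t-\atom{t-1})\), \(-\atom{-(q-1)}+\alpha(\atom\xi-\atom{\xi+1})\), and \(-\atom{-q}\). These are separated by comparing prefix sums, i.e.\ interval supports, using \(q\ge5\) and \(q\not\equiv0\pmod p\).
\end{itemize}
A full enumeration of your coincidence patterns would also work. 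That means the generic four-atom case, plus \(y_0=\Delta-2P\) for \(\{q\}\) and \(y_0\in\{\Delta-P(q-1),\Delta-P(q-2)\}\) for \(\{1\}\). It must actually be carried out, including the \(\alpha=1\) subcase.
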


\begin{proof}
Since \(G\) is connected and \(p^aq>1\), its divisor set \(\mathcal D\) is
nonempty. If \(T=\varnothing\), then every divisor in \(\mathcal D\) is
divisible by \(p\), so \(\gcd(\mathcal D)\ge p\), contradicting
Lemma~\ref{lem:gen-components}. Thus \(T\ne\varnothing\), and the ordinary
spectrum determines \(\Delta=x_1\).

Let \(\alpha\in\F_p\) be the residue class of \(q-1\), and let \(\rho\) be
the coefficientwise reduction of \(\M_G-\atom\Delta\) modulo \(p\). Since
\(\Q_\tau(y)\equiv\atom{y+P\tau}-\atom y\pmod p\), reducing
\eqref{eq:w1-special-split} modulo \(p\) gives
\begin{equation}\label{eq:w1-top-signature}
\rho=-\atom{y_1}+\alpha(\atom{x_0}-\atom{y_0}),
\qquad
y_1=\Delta-Pd,\quad x_0-y_0=Pc.
\end{equation}

First suppose \(c=0\). Since \(T\ne\varnothing\),
\eqref{eq:w1-state-table} gives \(T=\{1,q\}\) and \(d=q\). Moreover
\(x_0=y_0\), so
\[
\rho=-\atom{y_1}=-\atom{\Delta-Pq}.
\]
We keep this value for comparison below.

Now suppose \(c\ne0\), so \(c=\pm1\). If \(\alpha=0\), then again
\(\rho=-\atom{y_1}\), so \(y_1\) and
\(d=(\Delta-y_1)/P\) are determined. Here \(d=1\) for \(T=\{q\}\) and
\(d=q-1\) for \(T=\{1\}\), while the case \(c=0\) has \(d=q\). Hence
\(T\) is determined.

Assume \(\alpha\ne0\). By \eqref{eq:w1-top-signature},
\[
y_1\equiv\Delta\pmod P,
\qquad
x_0\equiv y_0\pmod P.
\]
If \(x_0\not\equiv\Delta\pmod P\), then \(x_0,y_0\) lie in one residue
class modulo \(P\), different from the class of \(y_1\). Keeping in \(\rho\)
only the terms \(\atom z\) with \(z\equiv\Delta\pmod P\) therefore gives
\(-\atom{y_1}\). Thus \(y_1\), then \(d=(\Delta-y_1)/P\), and hence \(T\)
are determined.

It remains to consider
\(x_0\equiv y_0\equiv\Delta\pmod P\). Replace every index \(z\) in
\eqref{eq:w1-top-signature} by \((z-\Delta)/P\). For \(T=\{q\}\), set
\(t=(x_0-\Delta)/P\); here \(c=1,d=1\), so
\(y_1=\Delta-P\) and \(y_0=x_0-P\). For \(T=\{1\}\), set
\(\xi=(x_0-\Delta)/P\); here \(c=-1,d=q-1\), so
\(y_1=\Delta-P(q-1)\) and \(y_0=x_0+P\). Together with the case
\(T=\{1,q\}\) above, the three resulting expressions are
\begin{align*}
\mathsf A_t&=-\atom{-1}+\alpha(\atom t-\atom{t-1}) &&(T=\{q\}),\\
\mathsf B_\xi&=-\atom{-(q-1)}+\alpha(\atom\xi-\atom{\xi+1}) &&(T=\{1\}),\\
\mathsf C&=-\atom{-q} &&(T=\{1,q\}).
\end{align*}

For \(\nu=\sum_{j\in\Z}\nu(j)\atom j\), write
\(S_\nu(k)=\sum_{j\le k}\nu(j)\). If \(\ell<r\), then the prefix sum of
\(\atom\ell-\atom r\) equals \(1\) at
\(\ell,\ell+1,\ldots,r-1\) and \(0\) elsewhere. Since \(\alpha\ne0\), the
case \(q=3\) is impossible: it would force \(p=2\) and
\(\alpha=q-1\equiv0\pmod p\). Hence \(q\ge5\).

If \(\mathsf A_t=\mathsf C\), taking prefix sums gives
\[
\mathbf1_{\{-q,-q+1,\ldots,-2\}}
=\alpha\mathbf1_{\{t-1\}},
\]
which is impossible. If \(\mathsf B_\xi=\mathsf C\), prefix sums give
\(\mathbf1_{\{-q\}}+\alpha\mathbf1_{\{\xi\}}=0\), so
\(\xi=-q\) and \(\alpha=-1\). Then \(\alpha\equiv q-1\pmod p\) gives
\(q\equiv0\pmod p\), impossible. Finally, if
\(\mathsf A_t=\mathsf B_\xi\), prefix sums give
\[
\mathbf1_{\{-(q-1),-(q-2),\ldots,-2\}}
=\alpha\bigl(\mathbf1_{\{t-1\}}+\mathbf1_{\{\xi\}}\bigr),
\]
but the left-hand side is nonzero at \(q-2\ge3\) integers and the
right-hand side at at most two. Thus the three possible choices of \(T\)
are distinguishable from the ordinary spectrum.
\end{proof}

\begin{proposition}
\label{prop:w1-immediate}
Suppose \(p\) is odd and \(p\nmid(q-1)\).  The ordinary spectrum of a
connected \(\ICG(p^a q,\mathcal D)\) uniquely determines the spectral counting measure \(\M_H\) of \(H\).
\end{proposition}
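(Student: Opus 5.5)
The plan is to peel off the known correction terms in \eqref{eq:w1-special-split} and then divide by \(p\). Since \(G\) is connected, Lemma~\ref{lem:w1-top-state} shows that the ordinary spectrum determines \(T\). Hence, by \eqref{eq:w1-state-table}, the integers \(c\) and \(d\) are known. The degree \(\Delta\) is the largest eigenvalue, so \(y_1=\Delta-Pd\) is known by \eqref{eq:w1-known-top-relations}. Rearranging \eqref{eq:w1-special-split} gives
\[
p\M_H=\M_G-(q-1)\Q_c(y_0)-\Q_d(y_1).
\]
So it suffices to determine the correction \((q-1)\Q_c(y_0)\); coefficientwise division by \(p\) is then exact and yields \(\M_H\). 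The only unknown in that correction is \(y_0\), and then \(x_0=y_0+Pc\).

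If \(c=0\), then \(\Q_0(y_0)=(p-1)\atom{y_0}+\atom{y_0}-p\atom{y_0}=0\). In that case the correction vanishes identically and \(\M_H=(\M_G-\Q_d(y_1))/p\) is already determined. So assume \(c=\pm1\), and hence \(x_0\neq y_0\).

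Let \(\alpha\) be the residue of \(q-1\) in \(\F_p\); by hypothesis \(\alpha\neq0\). Let \(\rho\) be the reduction of \(\M_G-\atom\Delta\) modulo \(p\), as in the proof of Lemma~\ref{lem:w1-top-state}. Identity \eqref{eq:w1-top-signature} gives
\[
\rho+\atom{y_1}\equiv\alpha\bigl(\atom{x_0}-\atom{y_0}\bigr)\pmod p.
\]
The left side is computable from the spectrum, because \(y_1\) is already known. On the right, \(x_0\neq y_0\) and \(\alpha\neq0\), so the class modulo \(p\) has support exactly \(\{x_0,y_0\}\), with nonzero coefficients \(\alpha\) and \(-\alpha\).

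Thus the support recovers the unordered pair \(\{x_0,y_0\}\). Since \(x_0-y_0=Pc\) with \(c\) known, the larger element is \(x_0\) when \(c=1\) and \(y_0\) when \(c=-1\); this fixes \(y_0\). (Alternatively, because \(p\) is odd, \(\alpha\neq-\alpha\) in \(\F_p\), so the coefficients themselves distinguish the two points.)

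With \(y_0\) known, every term except \(p\M_H\) in \eqref{eq:w1-special-split} is explicit. Subtracting these terms and dividing by \(p\) recovers \(\M_H\).

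The only delicate point is that \(y_0\) or \(x_0\) might coincide with \(y_1\), or with eigenvalues in the lower layers. This causes no trouble. The mod \(p\) identity is an exact identity in \(\mathcal A/p\mathcal A\): all lower layers enter \(\M_G\) only through \(p\M_H\), which vanishes modulo \(p\). Moreover, \(\atom{y_1}\) is added back exactly, so coincidences merely add coefficients and do not hide the support of \(\alpha(\atom{x_0}-\atom{y_0})\).
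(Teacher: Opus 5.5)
Your proposal is correct and matches the paper's proof step for step: recover \(T\) and hence \(c,d,y_1\), reduce the split identity modulo \(p\) to isolate \(\alpha(\atom{x_0}-\atom{y_0})\), dispose of \(c=0\) via \(\Q_0(y_0)=0\), and then divide the exact identity by \(p\). The only minor difference is how you order the pair \(\{x_0,y_0\}\): you use \(x_0-y_0=Pc\), while the paper multiplies by \(\alpha^{-1}\) and uses that \(1\neq-1\) in \(\F_p\) for odd \(p\). Both work, and you also mention the paper's route as an alternative.
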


\begin{proof}
By Lemma~\ref{lem:w1-top-state}, the ordinary spectrum determines \(T\),
and hence \(c,d\). Since \(\Delta\) is known,
\(y_1=\Delta-Pd\) and \(\Q_d(y_1)\) are known.

Let \(\rho\) denote the coefficientwise reduction of
\(\M_G-\atom\Delta\) modulo \(p\), and let \(\alpha\in\F_p\) be the
residue class of \(q-1\). By hypothesis \(\alpha\neq0\), so it is
invertible in \(\F_p\). From \eqref{eq:w1-top-signature},
\[
\alpha^{-1}\bigl(\rho+\atom{y_1}\bigr)
=
\atom{x_0}-\atom{y_0}.
\]
Suppose first that \(c=\pm1\). Then \(x_0-y_0=Pc\neq0\), so
\(x_0\) and \(y_0\) are distinct integers. Since \(p\) is odd,
\(1\neq-1\) in \(\F_p\). Thus the coefficient \(+1\) identifies \(x_0\), while the coefficient
\(-1\) identifies \(y_0\), so both are recovered from the ordinary spectrum.

If \(c=0\), then \(x_0=y_0\) and \(\Q_c(y_0)=\Q_0(y_0)=0\). In this case
the value of \(y_0\) is irrelevant for recovering \(\M_H\).

Consequently, in every case both correction terms in
\eqref{eq:w1-special-split} are known. Rearranging that exact identity
gives
\begin{equation*}
\M_H
=
\frac1p
\bigl(
\M_G-(q-1)\Q_c(y_0)-\Q_d(y_1)
\bigr).
\end{equation*}
The division is coefficientwise exact because the numerator is
identically \(p\M_H\) in \(\mathcal A\).

\end{proof}

\subsection{Recursion for \texorpdfstring{\(p\mid(q-1)\)}{p dividing q-1}}

Assume \(p\mid(q-1)\), and write
\[
q-1=p^sw,\qquad s=v_p(q-1)\ge1,\qquad p\nmid w.
\]
Hence the term \((q-1)\Q_c(y_0)\) in \eqref{eq:w1-special-split} vanishes modulo \(p\). The recursion below divides an exact
identity by \(p\) one step at a time. If \(s<a\), it reaches the level
\(r=a-s\); if \(s\ge a\), it can reach \(r=0\). The same setup applies to
all primes \(p\). For \(r\ge1\) and
\(y,\tau\in\Z\), define
\begin{equation}\label{eq:w1-Qrc}
\Q_{r,\tau}(y)
=
\atom{y+\tau p^r}+(p-1)\atom y-p\atom{y+\tau p^{r-1}}.
\end{equation}
At the top level \(r=a\), this agrees with the correction
\(\Q_\tau(y)\) defined in \eqref{eq:gen-Q}.

\paragraph{Recursive setup.}
Set \(\mathcal D_a=\mathcal D\). For \(r=a,a-1,\ldots,1\), define
\(\mathcal D_{r-1}\) and \(T_r\) by the unique decomposition
\[
\mathcal D_r=p\mathcal D_{r-1}\mathbin{\dot\cup}T_r,
\qquad T_r\subseteq\{1,q\}.
\]
For \(0\le r\le a\), let \(\M_r\) be the spectral counting measure of
\(\ICG(p^rq,\mathcal D_r)\), and let \(\Delta_r\) be the eigenvalue
corresponding to the trivial character. For \(1\le r\le a\), let \((c_r,d_r)\)
be the pair associated with \(T_r\) by \eqref{eq:w1-state-table}.
The set \(T_r\) may be empty,
so \(d_r\in\{0,1,q-1,q\}\).

For \(1\le r\le a\) and \(v\in\{0,1\}\), let
\(x_{r,v},y_{r,v}\), and \(\gamma_{r,v}\) denote the quantities
\(x_v,y_v\), and \(\gamma_v\) from Proposition~\ref{prop:gen-spectral-vector-split}
applied to the level-\(r\) split
\(\mathcal D_r=p\mathcal D_{r-1}\mathbin{\dot\cup}T_r\). At the top level,
the recursive notation agrees with the original notation:
\[
\mathcal D_{a-1}=\mathcal E,\qquad
\M_a=\M_G,\qquad \M_{a-1}=\M_H,
\]
and \(x_{a,v}=x_v\), \(y_{a,v}=y_v\), and
\(\gamma_{a,v}=\gamma_v\); in particular, \(y_{a,0}=y_0\). Applying
Proposition~\ref{prop:gen-measure-split} to
\(\ICG(p^rq,\mathcal D_r)\) with \(b=1\) gives
\[
\M_r
=
p\M_{r-1}
+(q-1)\Q_{r,c_r}(y_{r,0})
+\Q_{r,d_r}(y_{r,1}).
\]
Since \(x_{r,1}\) and \(\gamma_{r,1}\) are the trivial-character eigenvalues of the level-\(r\) and level-\((r-1)\) graphs, respectively, regularity gives \(x_{r,1}=\Delta_r\) and \(\gamma_{r,1}=\Delta_{r-1}\). Applying
\eqref{eq:gen-top-difference} at level \(r\), and setting
\(\eta_r:=y_{r,1}\), yields
\begin{equation}\label{eq:w1-level-relations}
\eta_r=\Delta_r-p^rd_r,
\qquad
\Delta_{r-1}=\eta_r+p^{r-1}d_r.
\end{equation}
Then \eqref{eq:w1-Qrc} gives
\(\Q_{r,d_r}(\eta_r)
=\atom{\Delta_r}+(p-1)\atom{\eta_r}
-p\atom{\Delta_{r-1}}\). Since \(q-1=p^sw\), substitution yields
\begin{equation}\label{eq:w1-layer-recurrence}
\M_r
=
p\M_{r-1}
+(p-1)\atom{\eta_r}
+\atom{\Delta_r}
-p\atom{\Delta_{r-1}}
+p^sw\Q_{r,c_r}(y_{r,0}).
\end{equation}
Reducing coefficientwise modulo \(p\) gives
\begin{equation}\label{eq:w1-clean-congruence}
\M_r\equiv\atom{\Delta_r}-\atom{\eta_r}\pmod p.
\end{equation}

For \(Z\in\mathcal A\) and \(1\le r\le a-1\), whenever the numerator
below is coefficientwise divisible by \(p\), define
\begin{equation*}
\mathcal R_r(Z)
=
\frac{
Z-(p-1)\atom{\eta_r}-\atom{\Delta_r}
+p\atom{\Delta_{r-1}}
}{p}.
\end{equation*}

By Lemma~\ref{lem:w1-top-state}, \(T_a=T\) is determined; hence
\(c_a=c\) and \(d_a=d\) are determined as well. Since
\(\Delta_a=\Delta\) is known, \eqref{eq:w1-level-relations} with
\(r=a\) determines
\(\eta_a=y_1=\Delta_a-p^ad_a\) and
\(\Delta_{a-1}=\eta_a+p^{a-1}d_a\). Thus both \(\Q_d(y_1)\) and \(\Delta_{a-1}\) are known. Subtracting
\(\Q_d(y_1)\) from \eqref{eq:w1-special-split} and dividing coefficientwise
by \(p\) gives
\begin{equation}\label{eq:w1-K}
\boxed{
K:=\frac{\M_G-\Q_d(y_1)}p
=
\M_H+p^{s-1}w\Q_{a,c}(y_0).
}
\end{equation}
Set \(K_0:=K\). For \(c=0\), the correction term is zero.

\begin{lemma}\label{lem:w1-delayed}
Assume \(s\ge1\), and put
\[
J=\min\{s-1,a-1\}.
\]
Starting from the known pair \((K_0,\Delta_{a-1})\), the recursion determines all pairs
\[
(K_j,\Delta_{a-1-j}),\qquad 0\le j\le J.
\]
More precisely, for \(0\le j<J\), set
\(r=a-1-j\). Then the pair \((K_j,\Delta_r)\) uniquely
determines \(T_r\) and \(\Delta_{r-1}\), and the numerator defining
\(\mathcal R_r(K_j)\) is coefficientwise divisible by \(p\). Hence
\(K_{j+1}:=\mathcal R_r(K_j)\) is well defined.

Throughout the recursion,
\begin{equation}\label{eq:w1-delayed}
K_j
=
\M_{a-1-j}
+
\sum_{i=0}^{j}
p^{\,s-1-(j-i)}w\,
\Q_{a-i,c_{a-i}}(y_{a-i,0}),
\qquad 0\le j\le J.
\end{equation}
\end{lemma}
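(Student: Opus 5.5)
Induction on \(j\) is the natural route, with \eqref{eq:w1-delayed} as the invariant. For \(j=0\) the invariant is exactly \eqref{eq:w1-K}, since \(p^{s-1}w\Q_{a,c}(y_0)\) is the \(i=0\) term. The input pair \((K_0,\Delta_{a-1})\) is known from Lemma~\ref{lem:w1-top-state} together with \eqref{eq:w1-level-relations} at \(r=a\).

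For the inductive step, assume \(0\le j<J\), set \(r=a-1-j\), and suppose \eqref{eq:w1-delayed} holds for \(K_j\) with \((K_j,\Delta_r)\) known. Since \(j<J\le s-1\), every exponent \(s-1-(j-i)\) with \(0\le i\le j\) is at least \(s-1-j\ge1\). Hence the whole correction sum in \eqref{eq:w1-delayed} vanishes modulo \(p\), and
\[
K_j\equiv\M_r\equiv\atom{\Delta_r}-\atom{\eta_r}\pmod p
\]
by \eqref{eq:w1-clean-congruence}. Here \(r\ge1\) because \(j<J\le a-1\).

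Next we recover \(T_r\) from \(K_j-\atom{\Delta_r}\bmod p\), which equals \(-\atom{\eta_r}\) with \(\eta_r=\Delta_r-p^rd_r\).
\begin{itemize}
\item If \(d_r\ne0\), then \(\eta_r\ne\Delta_r\), so the reduction is the single atom \(-\atom{\eta_r}\). This determines \(\eta_r\), hence \(d_r=(\Delta_r-\eta_r)/p^r\in\{1,q-1,q\}\). These values are distinct, so \(T_r\) is determined.
\item If \(d_r=0\), then \(\eta_r=\Delta_r\) and the reduction is zero, so this case is recognized too and gives \(T_r=\varnothing\).
\end{itemize}
So \(T_r\), \(c_r\), \(d_r\), \(\eta_r\) are known, and \(\Delta_{r-1}=\eta_r+p^{r-1}d_r\) follows from \eqref{eq:w1-level-relations}. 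This part is the main obstacle, because \(\Delta_r\) need not be the largest eigenvalue of the possibly disconnected level-\(r\) graph. The argument avoids that issue by carrying \(\Delta_r\) along the recursion rather than reading it off the spectrum.

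For divisibility and the new formula, substitute \eqref{eq:w1-layer-recurrence} for \(\M_r\) into \eqref{eq:w1-delayed}. This gives
\[
K_j-(p-1)\atom{\eta_r}-\atom{\Delta_r}+p\atom{\Delta_{r-1}}
=p\M_{r-1}+p^sw\Q_{r,c_r}(y_{r,0})+\sum_{i=0}^{j}p^{s-1-(j-i)}w\Q_{a-i,c_{a-i}}(y_{a-i,0}).
\]
Every term on the right is divisible by \(p\), since \(s\ge1\) and the exponents in the sum are at least \(1\). Dividing by \(p\) gives
\[
K_{j+1}=\M_{r-1}+p^{s-1}w\Q_{a-(j+1),c_{a-(j+1)}}(y_{a-(j+1),0})+\sum_{i=0}^{j}p^{s-1-(j+1-i)}w\Q_{a-i,c_{a-i}}(y_{a-i,0}).
\]
This is \eqref{eq:w1-delayed} at \(j+1\): the new term is the \(i=j+1\) summand, with exponent \(s-1-0\), and \(r-1=a-1-(j+1)\). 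The pair \((K_{j+1},\Delta_{r-1})\) is known, which completes the induction.
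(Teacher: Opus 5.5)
Your proof is correct and follows the paper's argument step for step. You start from \eqref{eq:w1-K} at $j=0$, use $K_j\equiv\M_r\equiv\atom{\Delta_r}-\atom{\eta_r}\pmod p$ to read off $\eta_r$ and hence $d_r$, $T_r$ and $\Delta_{r-1}$, and substitute \eqref{eq:w1-layer-recurrence} to get exact divisibility and the updated invariant. One small slip: when $d_r=0$, the reduction of $K_j$ vanishes, but that of $K_j-\atom{\Delta_r}$ is $-\atom{\Delta_r}$, not zero. Since $K_j-\atom{\Delta_r}\equiv-\atom{\eta_r}\pmod p$ determines $\eta_r$ in every case, your case split is unnecessary anyway.
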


\begin{proof}
By \eqref{eq:w1-K},
\[
K_0=\M_{a-1}+p^{s-1}w\Q_{a,c_a}(y_{a,0}),
\]
so \eqref{eq:w1-delayed} holds for \(j=0\). Thus the initial pair \((K_0,\Delta_{a-1})\) is known.

Now let \(0\le j<J\), set \(r=a-1-j\), and suppose that
\((K_j,\Delta_r)\) is known and that
\eqref{eq:w1-delayed} holds. Since \(j<J\le s-1\) and
\(j<J\le a-1\), we have \(s-1-j\ge1\) and \(r\ge1\). Thus every
correction term in \eqref{eq:w1-delayed} vanishes coefficientwise
modulo \(p\), and by \eqref{eq:w1-clean-congruence},
\[
K_j\equiv \M_r\equiv \atom{\Delta_r}-\atom{\eta_r}\pmod p.
\]
Now \(\eta_r=\Delta_r-p^rd_r\), where \(d_r\in\{0,1,q-1,q\}\), and the four resulting values of \(\eta_r\) are pairwise distinct. Since \(\Delta_r\) is known, the above congruence therefore uniquely determines \(d_r\): the zero measure corresponds to \(d_r=0\), while otherwise the nonzero coefficient at the index different from \(\Delta_r\) determines \(\eta_r\). Equation~\eqref{eq:w1-state-table} then determines \(T_r\) and \(c_r\), and \(\Delta_{r-1}=\eta_r+p^{r-1}d_r\) determines \(\Delta_{r-1}\). Thus \(\mathcal R_r\) is known.

Substituting the expression for \(\M_r\) from
\eqref{eq:w1-layer-recurrence} into \eqref{eq:w1-delayed}, we obtain
\[
\begin{aligned}
&K_j-(p-1)\atom{\eta_r}-\atom{\Delta_r}+p\atom{\Delta_{r-1}}\\
&\quad=
p\left(
\M_{r-1}
+p^{s-1}w\Q_{r,c_r}(y_{r,0})
+
\sum_{i=0}^{j}
p^{\,s-2-(j-i)}w\,
\Q_{a-i,c_{a-i}}(y_{a-i,0})
\right).
\end{aligned}
\]
Thus the numerator defining \(\mathcal R_r(K_j)\) is coefficientwise divisible by
\(p\), so \(K_{j+1}:=\mathcal R_r(K_j)\) is well defined. Dividing by \(p\),
using \(r=a-(j+1)\), and combining the new correction with the accumulated
ones gives
\[
K_{j+1}
=
\M_{a-2-j}
+
\sum_{i=0}^{j+1}
p^{\,s-1-((j+1)-i)}w\,
\Q_{a-i,c_{a-i}}(y_{a-i,0}),
\]
which is \eqref{eq:w1-delayed} at the next step. Hence the pair \((K_{j+1},\Delta_{r-1})\) provides the input for the next recursion step, and the recursion continues through \(K_J\).
\end{proof}

\subsection{Odd primes with \texorpdfstring{\(p\mid(q-1)\)}{p dividing q-1}}

Proposition~\ref{prop:w1-immediate} handles the odd-prime case \(p\nmid(q-1)\). Assume now that \(p\) is odd and \(p\mid(q-1)\), and write
\[
q-1=p^sw,\qquad s=v_p(q-1)\ge1,\qquad p\nmid w.
\]
Since \(p\) and \(q\) are odd, \(w\) is a positive even integer. We distinguish the two ranges \(s\ge a\) and \(1\le s<a\).

\begin{lemma}\label{lem:w1-large-valuation}
Suppose that \(p\) is odd and \(s=v_p(q-1)\ge a\). Then the ordinary
spectrum uniquely determines \(\mathcal D\).
\end{lemma}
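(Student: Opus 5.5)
The plan is not to run the recursion of Lemma~\ref{lem:w1-delayed} here. Instead, I would reduce the statement to the prime-separation theorem of Li and Liu \cite[Theorem~1.3(a)]{LiLiu2024}. That theorem proves So's conjecture for the odd family \(p^aq\) under the extra hypothesis \(p^a<q\). So the whole task is to check that the hypothesis \(s=v_p(q-1)\ge a\) forces the Li--Liu setting, and that their conclusion is exactly what is needed here.

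\textbf{Step 1.} I would check the separation condition. Since \(s=v_p(q-1)\ge a\), we have \(p^a\mid p^s\mid(q-1)\). As \(q-1\ge1\), this gives \(p^a\le q-1<q\), so \(p^a<q\).

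\textbf{Step 2.} I would check the remaining hypotheses of \cite[Theorem~1.3(a)]{LiLiu2024}. Both primes are odd: \(p\) by assumption, and \(q>p\ge3\). Moreover \(n=p^aq\) with \(a\ge1\).

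\textbf{Step 3.} I would apply the cited theorem. Since it is a statement about ordinary spectra of \(\ICG(n,\mathcal D)\) for arbitrary \(\mathcal D\subseteq\Div^*(n)\), it yields directly that \(\Spec(\ICG(p^aq,\mathcal D))\) determines \(\mathcal D\). No connectivity assumption on \(G\) enters, which is consistent with the earlier remark that this lemma holds without it. Nor do we need Lemma~\ref{lem:gen-components} or the measure identity \eqref{eq:w1-special-split} in this range.

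The only point requiring care, and so the main obstacle, is making sure the form of the Li--Liu result matches. Their theorem must concern the ordinary spectrum as a multiset, not the labelled spectral vector, and it must cover every exponent \(a\ge1\) and every divisor set, including the empty set and divisor sets giving disconnected graphs. If their statement were restricted, for instance to connected graphs, I would first reduce via Lemma~\ref{lem:gen-components}. The spectrum determines \(g=\gcd(\mathcal D)\) and the spectrum of \(\ICG(n/g,\mathcal D/g)\), and \(n/g\) is again of the form \(p^{a'}q\), \(p^{a'}\), \(q\), or \(1\). For the forms \(p^{a'}q\), the separation \(p^{a'}\le p^a<q\) persists, so the cited theorem applies. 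For prime-power orders, Lemma~\ref{lem:gen-prime-power} applies. Then \(\mathcal D=g\cdot(\mathcal D/g)\) is recovered.
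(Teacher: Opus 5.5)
Your proposal is correct and matches the paper's proof. Both verify \(p^a<q\) and then invoke \cite[Theorem~1.3(a)]{LiLiu2024}. The only difference is that the paper gets the stronger bound \(q-1=p^sw\ge2p^a\) from the evenness of \(w\), whereas your divisibility argument \(p^a\mid q-1\) gives \(p^a\le q-1<q\), which already suffices.
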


\begin{proof}
Since \(w\) is positive and even, \(q-1=p^sw\ge2p^a\), so \(p^a<q\).
The conclusion follows from \cite[Theorem~1.3(a)]{LiLiu2024}.
\end{proof}

It remains to consider \(1\le s<a\).

\begin{lemma}\label{lem:w1-odd-critical}
Assume that \(p\) is odd and \(1\le s<a\). Put \(m=a-s\ge1\),
\(L=p^s\), and \(W=w\). Then the pair \((K_{s-1},\Delta_m)\) is determined and
\begin{equation}\label{eq:w1-critical-odd}
K_{s-1}
\equiv
\atom{\Delta_m}-\atom{\eta_m}
+
W\bigl(\atom{x_0}-\atom{y_0}\bigr)
\pmod p.
\end{equation}
If \(c=0\), no recovery of \(y_0\) is required. Assume
\(c\in\{-1,1\}\). Since
\(\eta_m\equiv\Delta_m\pmod{p^m}\) and
\(x_0\equiv y_0\pmod{p^m}\), if
\(x_0\not\equiv\Delta_m\pmod{p^m}\), then
\(\Delta_m,\eta_m\) lie in one residue class modulo \(p^m\), while
\(x_0,y_0\) lie in another. Keeping only the terms whose indices are
congruent to \(x_0\pmod{p^m}\) gives
\[
W\bigl(\atom{x_0}-\atom{y_0}\bigr),
\]
which determines \(x_0\) and \(y_0\), since \(p\) is odd and \(p\nmid W\).
Otherwise, \(x_0\equiv y_0\equiv\Delta_m\pmod{p^m}\); write
\(y_0=\Delta_m+p^mt\) and \(d=d_m\). After replacing each index \(z\) by
\((z-\Delta_m)/p^m\) and subtracting the known term \(\atom0\), we obtain
\begin{equation}\label{eq:w1-odd-signature}
\nu_{d,t}
=
-\atom{-d}
+
W\bigl(\atom{t+cL}-\atom t\bigr),
\qquad
d\in\{0,1,LW,LW+1\},
\end{equation}
where the coefficients are understood in \(\F_p\).
\end{lemma}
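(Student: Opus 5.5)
Lemma~\ref{lem:w1-delayed} with \(J=\min\{s-1,a-1\}=s-1\) (since \(s<a\)) gives the pair \((K_{s-1},\Delta_{a-s})=(K_{s-1},\Delta_m)\), together with the identity \eqref{eq:w1-delayed} at \(j=s-1\):
\[
K_{s-1}=\M_m+\sum_{i=0}^{s-1}p^{\,i}w\,\Q_{a-i,c_{a-i}}(y_{a-i,0}).
\]
Modulo \(p\), every term with \(i\ge1\) vanishes, leaving \(K_{s-1}\equiv \M_m+w\Q_{a,c}(y_0)\). Here \(m\ge1\), so \eqref{eq:w1-clean-congruence} at level \(m\) gives \(\M_m\equiv\atom{\Delta_m}-\atom{\eta_m}\). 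Since \(\Q_{a,c}(y_0)\equiv\atom{y_0+cp^a}-\atom{y_0}=\atom{x_0}-\atom{y_0}\pmod p\), this yields \eqref{eq:w1-critical-odd}.

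For the residue-class statements, \eqref{eq:w1-level-relations} gives \(\eta_m=\Delta_m-p^md_m\equiv\Delta_m\pmod{p^m}\), and \(x_0-y_0=p^ac\) with \(a\ge m\) gives \(x_0\equiv y_0\pmod{p^m}\). When \(c=0\), \(\Q_c\) vanishes and \(y_0\) is not needed. When \(c=\pm1\) and \(x_0\not\equiv\Delta_m\pmod{p^m}\), the congruence \eqref{eq:w1-critical-odd} splits into two disjoint groups of residue classes modulo \(p^m\). The restriction to the class of \(x_0\) is \(W(\atom{x_0}-\atom{y_0})\) with \(x_0\ne y_0\). Since \(p\nmid W\) and \(p\) is odd, the coefficients \(W\) and \(-W\) are distinct and nonzero in \(\F_p\). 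This restriction is nonzero, so the class is identified, and the two coefficients then label \(x_0\) and \(y_0\).

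In the remaining case, \(\eta_m=\Delta_m-p^md\) rescales to \(-d\) and \(\Delta_m\) to \(0\). With \(y_0=\Delta_m+p^mt\), we get \(x_0=y_0+p^ac=\Delta_m+p^m(t+cp^s)\), which rescales to \(t+cL\). Subtracting \(\atom0\) gives \(\nu_{d,t}=-\atom{-d}+W(\atom{t+cL}-\atom t)\). The list of possible \(d\) comes from the table \eqref{eq:w1-state-table}, namely \(d\in\{0,1,q-1,q\}\) with \(q-1=LW\). The only step needing care is bookkeeping: \(d\) here is \(d_m\), the state at level \(m\), not \(d_a\), and \(c\) is \(c_a\). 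The computation is direct once Lemma~\ref{lem:w1-delayed} is in hand.
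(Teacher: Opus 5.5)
Your proposal is correct and follows essentially the same route as the paper. You specialize \eqref{eq:w1-delayed} at \(j=s-1\), so the level-\((a-i)\) correction carries \(p^iw\). You then reduce modulo \(p\) using \eqref{eq:w1-clean-congruence} at level \(m\) and \(\Q_{a,c}(y_0)\equiv\atom{x_0}-\atom{y_0}\), split by residue classes modulo \(p^m\), and rescale. Your observation that the part of the measure outside the class of \(\Delta_m\) is nonzero makes explicit something the paper leaves implicit: the class of \(x_0\), and which of the two cases holds, can be read off from the data.
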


\begin{proof}
Since \(s<a\), Lemma~\ref{lem:w1-delayed} reaches \(j=s-1\) and determines
\((K_{s-1},\Delta_m)\). In \eqref{eq:w1-delayed}, the correction from level
\(a-i\) then has coefficient \(p^iw\), so the terms with \(i\ge1\) vanish
modulo \(p\). Using \eqref{eq:w1-clean-congruence}, \eqref{eq:w1-Qrc}, and
\eqref{eq:w1-known-top-relations} gives \eqref{eq:w1-critical-odd}. Here
\eqref{eq:w1-level-relations} gives
\(\eta_m=\Delta_m-p^md_m\), where
\(d_m\in\{0,1,q-1,q\}\), and
\eqref{eq:w1-known-top-relations} gives
\(x_0-y_0=cp^a=cp^mL\).

If \(c=0\), the original correction vanishes, so no recovery of \(y_0\) is
required. Assume \(c\in\{-1,1\}\). Since
\(\eta_m\equiv\Delta_m\pmod{p^m}\) and
\(x_0\equiv y_0\pmod{p^m}\), the indices in
\(\atom{\Delta_m}-\atom{\eta_m}\) are congruent to
\(\Delta_m\pmod{p^m}\), while the indices in
\(W(\atom{x_0}-\atom{y_0})\) are congruent to
\(x_0\pmod{p^m}\). If
\(x_0\not\equiv\Delta_m\pmod{p^m}\), these are two different residue
classes. Keeping only the terms in \eqref{eq:w1-critical-odd} whose indices
are congruent to \(x_0\pmod{p^m}\) gives
\[
W\bigl(\atom{x_0}-\atom{y_0}\bigr).
\]
Because \(p\) is odd and \(p\nmid W\), the coefficient \(W\) is nonzero
and different from \(-W\) in \(\F_p\). Hence the coefficient \(W\)
identifies \(x_0\), while the coefficient \(-W\) identifies \(y_0\).
Otherwise \(x_0\equiv y_0\equiv\Delta_m\pmod{p^m}\). Write
\(y_0=\Delta_m+p^mt\). Since \(x_0-y_0=cp^mL\),
\(x_0=\Delta_m+p^m(t+cL)\). Replace every index \(z\) in
\eqref{eq:w1-critical-odd} by \((z-\Delta_m)/p^m\), and subtract the known
term \(\atom0\). Since \(q-1=LW\), we have
\(d_m\in\{0,1,LW,LW+1\}\). Writing \(d:=d_m\), the resulting measure is
\eqref{eq:w1-odd-signature}.
\end{proof}

\begin{lemma}\label{lem:w1-odd-rigidity}
Let \(p\) be odd and \(s\ge1\), put \(L=p^s\), and let \(W\ge2\) be
even with \(p\nmid W\). Fix \(c\in\{-1,1\}\). Then the map defined by
\eqref{eq:w1-odd-signature},
\[
(d,t)\longmapsto\nu_{d,t},
\qquad d\in\{0,1,LW,LW+1\},\quad t\in\Z,
\]
is injective.
\end{lemma}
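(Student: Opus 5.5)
We argue by taking prefix sums, as in the proof of Lemma~\ref{lem:w1-top-state}, but now with coefficients in \(\F_p\). For a finitely supported \(\nu=\sum_j\nu(j)\atom j\) with coefficients in \(\F_p\), write \(S_\nu(k)=\sum_{j\le k}\nu(j)\in\F_p\). Since \(\nu(k)=S_\nu(k)-S_\nu(k-1)\), the map \(\nu\mapsto S_\nu\) is injective. It therefore suffices to show that \(S_{\nu_{d,t}}=S_{\nu_{d',t'}}\) forces \((d,t)=(d',t')\).

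\emph{Step 1: compute the prefix sums.} We have \(S_{-\atom{-d}}=-\mathbf 1_{[-d,\infty)}\). For \(c=1\), the prefix sum of \(\atom{t+L}-\atom t\) is \(-1\) on \(I_t:=\{t,\dots,t+L-1\}\) and \(0\) elsewhere. For \(c=-1\), the prefix sum of \(\atom{t-L}-\atom t\) is \(+1\) on \(I_t:=\{t-L,\dots,t-1\}\) and \(0\) elsewhere. In both cases
\[
S_{\nu_{d,t}}=-\mathbf 1_{[-d,\infty)}-cW\,\mathbf 1_{I_t},
\]
where \(I_t\) is an interval of exactly \(L\) consecutive integers, obtained from \(I_0\) by translation by \(t\). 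Note that \(cW\ne0\) in \(\F_p\) because \(p\nmid W\).

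\emph{Step 2: compare two parameter pairs.} Suppose \(S_{\nu_{d,t}}=S_{\nu_{d',t'}}\). Rearranging gives
\[
\mathbf 1_{[-d',\infty)}-\mathbf 1_{[-d,\infty)}=cW\bigl(\mathbf 1_{I_t}-\mathbf 1_{I_{t'}}\bigr).
\]

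If \(d=d'\), the left side vanishes. Since \(cW\ne0\), this gives \(\mathbf 1_{I_t}=\mathbf 1_{I_{t'}}\), and two translates of the same interval coincide only when \(t=t'\).

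If \(d\ne d'\), the left side is supported on a single nonempty interval of consecutive integers, namely the half-open range between \(-d\) and \(-d'\), and is constant there, equal to \(\pm1\).

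\emph{Step 3: the right side cannot have this shape when \(d\ne d'\).} This is the main point to check. If \(t=t'\), the right side is zero, a contradiction. If \(t\ne t'\), put \(\delta=|t-t'|\).

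When \(\delta<L\), the right side takes the value \(cW\) on one block of \(\delta\) integers and \(-cW\) on another. These two blocks are separated by a gap of \(L-\delta>0\) integers where it vanishes, so its support is not an interval.

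When \(\delta>L\), the two blocks of length \(L\) are disjoint with a nonempty gap between them, so again the support is not an interval.

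When \(\delta=L\), the support is an interval, but on it the function takes both values \(cW\) and \(-cW\). These are distinct in \(\F_p\) because \(p\) is odd and \(cW\ne0\), so the function is not constant on its support.

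In every case the right side is not a nonzero constant on a single interval, contradicting Step 2. Hence \((d,t)=(d',t')\).

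The only delicate point is that coefficients live in \(\F_p\). Cancellations such as \(W\equiv1\) or \(W\equiv-1\pmod p\) could merge atoms of \(\nu_{d,t}\) directly, which is why we work with prefix sums rather than with the atoms themselves. The argument uses only that \(p\) is odd and \(p\nmid W\). It does not need the specific values \(d\in\{0,1,LW,LW+1\}\), nor that \(W\) is even, beyond \(d\) ranging over distinct integers.
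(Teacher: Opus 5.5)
Your proof is correct. It follows the paper's overall strategy: pass to prefix sums in \(\F_p\), then compare a difference of two step functions with \(cW(\mathbf 1_{I_t}-\mathbf 1_{I_{t'}})\). Where you differ is in how you dispose of the case \(d\ne d'\).

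The paper counts support sizes. The left side has support of size \(|d_1-d_2|\in\{1,LW-1,LW,LW+1\}\), and the right side has support of size \(2\min(k,L)\). The odd lengths are excluded because \(L\) is odd and \(W\) is even, and the length \(LW\) is excluded by size when \(W>2\). Only for \(W=2\) does the paper use the observation that adjacent intervals carry the opposite values \(\pm cW\).

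You argue directly about shape instead. In each of the cases \(\delta<L\), \(\delta>L\) and \(\delta=L\), the right side fails to be a nonzero constant on a single interval, and this needs only \(p\) odd and \(p\nmid W\). Your argument is shorter and proves a stronger statement: injectivity holds for arbitrary integers \(d\), with no parity assumptions on \(L\) or \(W\).

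What the paper's size-counting buys is uniformity with the case \(p=2\) (Lemma~\ref{lem:w1-binary-rigidity}). There \(cW=-cW\) in \(\F_2\), so your \(\delta=L\) step breaks down and adjacent intervals genuinely can occur. In that setting one must use the specific values \(d\in\{0,1,LW,LW+1\}\), together with \(L\) even and \(W\) odd, to rule out \(|d_1-d_2|=2L\).
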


\begin{proof}
Suppose
\(\nu_{d_1,t_1}=\nu_{d_2,t_2}\). By
\eqref{eq:w1-odd-signature},
\[
-\atom{-d_1}
+W\bigl(\atom{t_1+cL}-\atom{t_1}\bigr)
=
-\atom{-d_2}
+W\bigl(\atom{t_2+cL}-\atom{t_2}\bigr).
\]
For \(\theta\in\Z\), let \(F_\theta:\Z\to\F_p\) be the
step function defined by \(F_\theta(x)=\mathbf1_{\{x\ge \theta\}}\). For
\(i\in\{1,2\}\), let \(I_i\) be the length-\(L\) integer interval
determined by \(t_i\): if \(c=1\), then
\(I_i=\{t_i,\ldots,t_i+L-1\}\), while if \(c=-1\), then
\(I_i=\{t_i-L,\ldots,t_i-1\}\). Taking prefix sums gives
\begin{equation}\label{eq:w1-prefix-rigidity}
F_{-d_2}-F_{-d_1}
=
cW\bigl(\mathbf1_{I_1}-\mathbf1_{I_2}\bigr).
\end{equation}

Assume first that \(d_1\ne d_2\). Since \(F_{-d_2}\) and
\(F_{-d_1}\) are step functions, their difference is equal to
\(1\) or \(-1\) precisely on the integer interval between their two
jump points and vanishes elsewhere. Its support therefore has length
\(|d_1-d_2|\). Since
\(d_1,d_2\in\{0,1,LW,LW+1\}\), the possible nonzero lengths are
\(1,LW-1,LW,LW+1\).

Let \(k\) be the distance between the starting points of \(I_1\) and
\(I_2\). If \(k<L\), the intervals overlap in \(L-k\) positions and
each contributes \(k\) positions to their symmetric difference; if
\(k\ge L\), they are disjoint. Thus
\[
\bigl|\supp
(\mathbf 1_{I_1}-\mathbf 1_{I_2})\bigr|
=
2\min(k,L).
\]
Since \(L\) is odd and \(W\) is even, the three possible lengths
\(1,LW-1,LW+1\) are odd, whereas the support on the right-hand side
of \eqref{eq:w1-prefix-rigidity} has even size. Hence these three
possibilities are excluded.

It remains to consider the length \(LW\). If \(W>2\), then
\(LW>2L\), while the right-hand side of
\eqref{eq:w1-prefix-rigidity} has support of size at most \(2L\), a
contradiction. Thus only \(W=2\) remains, in which case \(LW=2L\).
Since the left-hand side has contiguous support of length \(2L\),
the two length-\(L\) intervals \(I_1\) and \(I_2\) must be disjoint
and adjacent. On these two adjacent intervals, however, the
right-hand side has coefficients \(cW\) and \(-cW\), whereas the
left-hand side has one constant coefficient throughout its support.
Equality would therefore force \(cW=-cW\), equivalently
\(2cW=0\) in \(\F_p\), impossible because \(p\) is odd and
\(p\nmid W\). Consequently \(d_1=d_2\).

With \(d_1=d_2\), the left-hand side of
\eqref{eq:w1-prefix-rigidity} vanishes, so
\(cW(\mathbf 1_{I_1}-\mathbf 1_{I_2})=0\). Since
\(cW\ne0\) in \(\F_p\), we obtain \(I_1=I_2\). Since \(c\) is fixed, equality \(I_1=I_2\) implies equality of their starting points, and hence \(t_1=t_2\). Therefore
\((d,t)\mapsto\nu_{d,t}\) is injective.
\end{proof}

\begin{proposition}\label{prop:w1-odd-delayed}
Assume that \(p\) is odd and \(1\le s=v_p(q-1)<a\). Then the ordinary spectrum of a connected graph \(\ICG(p^aq,\mathcal D)\) uniquely determines \(\M_H\).
\end{proposition}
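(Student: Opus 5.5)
The plan is to recover \(y_0\), or the pair \((x_0,y_0)\), from the ordinary spectrum whenever \(c\neq0\). Once this is done, the exact identity \eqref{eq:w1-special-split} yields \(\M_H\) by subtraction and exact coefficientwise division by \(p\), just as in the proof of Proposition~\ref{prop:w1-immediate}.

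\emph{Known quantities.} By Lemma~\ref{lem:w1-top-state}, the spectrum determines \(T\), hence \(c,d\), and also \(y_1=\Delta-Pd\). Thus \(\Q_d(y_1)\) is known. If \(c=0\), then \(\Q_c(y_0)=0\) and
\[
\M_H=\frac{\M_G-\Q_d(y_1)}{p}
\]
directly. This is the quantity \(K\) of \eqref{eq:w1-K}.

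\emph{Reduction to the critical congruence.} Assume \(c=\pm1\). Since \(s<a\), Lemma~\ref{lem:w1-delayed} applies with \(J=s-1\), and Lemma~\ref{lem:w1-odd-critical} then shows that \((K_{s-1},\Delta_m)\) is determined and that the congruence \eqref{eq:w1-critical-odd} holds. Its left-hand side \(K_{s-1}\bmod p\) is therefore a known element of \(\F_p[\Z]\). Following Lemma~\ref{lem:w1-odd-critical}, I split into two cases.

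\begin{enumerate}
\item If \(x_0\not\equiv\Delta_m\pmod{p^m}\), then \(x_0\) and \(y_0\) are read off directly from the residue-class filtering. These two residue classes are themselves visible from the known data, since \(\Delta_m\) is known.
\item Otherwise, the normalized measure \(\nu_{d,t}\) in \eqref{eq:w1-odd-signature} is known. The hypotheses of Lemma~\ref{lem:w1-odd-rigidity} hold: \(W=w\) is even, \(W\ge2\), \(p\nmid W\), and \(c\) is already known. Hence \((d,t)\) is determined, which gives \(y_0=\Delta_m+p^mt\) and \(x_0=y_0+cP\).
\end{enumerate}

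\emph{Main obstacle.} The delicate point is that the case distinction must itself be decidable from the spectrum, because we do not know a priori whether \(x_0\equiv\Delta_m\pmod{p^m}\). I would resolve this by examining the support of the known residue \(\bar K:=K_{s-1}\bmod p\) outside the class of \(\Delta_m\) modulo \(p^m\).

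\begin{itemize}
\item If \(\bar K\) has nonzero coefficients outside that class, we are in the first case. These coefficients are exactly \(W\) at \(x_0\) and \(-W\) at \(y_0\). Both are nonzero and distinct because \(p\) is odd, and \(x_0\neq y_0\) because \(c\neq0\).
\item If \(\bar K\) has no support outside that class, then \(x_0\equiv\Delta_m\pmod{p^m}\) must hold, since otherwise the nonzero terms \(\pm W\) would appear outside it. We then apply the injectivity statement.
\end{itemize}

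The injectivity lemma is stated over all \(t\in\Z\) and all \(d\) in the four-element set. So any two candidate configurations giving the same normalized measure coincide, and no ambiguity remains.

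\emph{Conclusion.} With \(y_0\) determined, both correction terms \((q-1)\Q_c(y_0)\) and \(\Q_d(y_1)\) are known. Then
\[
\M_H=\frac{\M_G-(q-1)\Q_c(y_0)-\Q_d(y_1)}{p},
\]
where the division is exact because the numerator equals \(p\M_H\) identically.
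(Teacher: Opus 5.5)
Your proposal is correct and follows essentially the same route as the paper. The case \(c=0\) gives \(\M_H=K\). For \(c=\pm1\) you recover \(y_0\) through Lemma~\ref{lem:w1-odd-critical} and the rigidity Lemma~\ref{lem:w1-odd-rigidity}, and then subtract the known correction; your formula with \((q-1)\Q_c(y_0)/p\) is the paper's \(K-p^{s-1}w\Q_{a,c}(y_0)\).

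One step you add is worth keeping: you check that the case split can be decided from the data, by testing whether \(K_{s-1}\bmod p\) has support outside the residue class of \(\Delta_m\) modulo \(p^m\). This makes explicit a point the paper leaves implicit.
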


\begin{proof}
If \(c=0\), then by \eqref{eq:w1-Qrc},
\[
\Q_{a,c}(y_0)
=\Q_{a,0}(y_0)
=\atom{y_0}+(p-1)\atom{y_0}-p\atom{y_0}
=0,
\]
so \eqref{eq:w1-K} gives \(\M_H=K\). Assume
\(c=\pm1\), and put \(m=a-s\). By
Lemma~\ref{lem:w1-odd-critical}, if
\(x_0\not\equiv\Delta_m\pmod{p^m}\), then \(x_0\) and \(y_0\) are recovered
directly. If \(x_0\equiv y_0\equiv\Delta_m\pmod{p^m}\),
Lemma~\ref{lem:w1-odd-rigidity} recovers \(t\), and hence
\(y_0=\Delta_m+p^mt\).

Thus \(y_0\) is determined in all cases. All terms in \eqref{eq:w1-K}
except \(\M_H\) are now known, so
\[
\M_H
=
K-p^{s-1}w\Q_{a,c}(y_0).
\]
\end{proof}

\subsection{The case \texorpdfstring{\(p=2\)}{p=2}}

Now let \(p=2\). Since \(q\) is odd, \(2\mid(q-1)\)
automatically. Write \(q-1=2^sw\), where \(s=v_2(q-1)\ge1\) and \(w\)
is odd. Our goal in this subsection is to recover the spectral counting measure \(\M_H\) of \(H\) from the ordinary
spectrum. We distinguish the two
ranges \(s<a\) and \(s\ge a\). Lemma~\ref{lem:w1-top-state} still
determines \(T\).

When \(p=2\), \eqref{eq:w1-clean-congruence} becomes
\begin{equation*}
\M_r
\equiv
\atom{\Delta_r}+\atom{\eta_r}
\pmod2.
\end{equation*}
At each step of Lemma~\ref{lem:w1-delayed}, \(\Delta_r\) is known: the zero measure gives \(d_r=0\), while otherwise the index different from
\(\Delta_r\) with nonzero coefficient determines \(\eta_r\). Hence the recursion in Lemma~\ref{lem:w1-delayed} remains valid when \(p=2\).

\begin{lemma}\label{lem:w1-binary-critical}
Assume that \(s<a\), and put \(m=a-s\), \(L=2^s\), and \(W=w\). Then
the pair \((K_{s-1},\Delta_m)\) is determined and
\begin{equation}\label{eq:w1-critical-binary}
K_{s-1}
\equiv
\atom{\Delta_m}+\atom{\eta_m}
+
\bigl(\atom{x_0}-\atom{y_0}\bigr)
\pmod2.
\end{equation}
If \(c=0\), no recovery of \(y_0\) is required. Assume
\(c\in\{-1,1\}\). If \(x_0\not\equiv\Delta_m\pmod{2^m}\), then
\eqref{eq:w1-critical-binary} determines the unordered pair \(\{x_0,y_0\}\),
and \(x_0-y_0=c2^a\) determines its ordering. Otherwise,
\(x_0\equiv y_0\equiv\Delta_m\pmod{2^m}\); write
\(y_0=\Delta_m+2^mt\) and \(d=d_m\). After replacing each index \(z\) by
\((z-\Delta_m)/2^m\) and subtracting the known term \(\atom0\), we obtain
\begin{equation}\label{eq:w1-binary-signature}
\sigma_{d,t}
=
-\atom{-d}
+\atom{t+cL}
-\atom t,
\qquad
d\in\{0,1,LW,LW+1\},
\end{equation}
where the coefficients are understood in \(\F_2\).
\end{lemma}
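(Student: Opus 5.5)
The proof will mirror that of Lemma~\ref{lem:w1-odd-critical}, with the extra care needed because \(1=-1\) in \(\F_2\).

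\textbf{Step 1: reaching level \(m\).} Since \(s<a\), we have \(J=\min\{s-1,a-1\}=s-1\). The recursion of Lemma~\ref{lem:w1-delayed}, which the preceding discussion showed remains valid for \(p=2\), therefore reaches \(j=s-1\) and determines the pair \((K_{s-1},\Delta_m)\) with \(m=a-1-(s-1)=a-s\).

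\textbf{Step 2: the congruence \eqref{eq:w1-critical-binary}.} Specialize \eqref{eq:w1-delayed} to \(j=s-1\). The correction from level \(a-i\) then carries the coefficient \(2^{s-1-(s-1-i)}w=2^iw\). Hence every term with \(i\ge1\) vanishes modulo \(2\), and only the \(i=0\) term \(w\Q_{a,c}(y_0)\) survives. By \eqref{eq:w1-Qrc} with \(p=2\),
\[
\Q_{a,c}(y_0)=\atom{x_0}+\atom{y_0}-2\atom{y_0+c2^{a-1}}\equiv\atom{x_0}-\atom{y_0}\pmod 2,
\]
using \(x_0=y_0+c2^a\). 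Since \(w\) is odd, the factor \(w\) disappears modulo \(2\). For the remaining piece, \(\M_m\equiv\atom{\Delta_m}+\atom{\eta_m}\) by the \(p=2\) form of \eqref{eq:w1-clean-congruence}. Adding these gives \eqref{eq:w1-critical-binary}.

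\textbf{Step 3: the two residue-class cases.} The \(c=0\) case is trivial, because \(\Q_{a,0}=0\).

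For \(c=\pm1\), note two congruences:
\begin{itemize}
\item \(\eta_m=\Delta_m-2^md_m\equiv\Delta_m\pmod{2^m}\);
\item \(x_0-y_0=c2^a\) is divisible by \(2^m\).
\end{itemize}

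Suppose \(x_0\not\equiv\Delta_m\pmod{2^m}\). Restrict \eqref{eq:w1-critical-binary} to the indices congruent to \(x_0\) modulo \(2^m\). This leaves \(\atom{x_0}+\atom{y_0}\) in \(\F_2\), whose two indices are distinct because \(c2^a\ne0\). So the unordered pair \(\{x_0,y_0\}\) is read off as the support. The sign \(c\) is known from \(T\), so \(x_0-y_0=c2^a\) fixes which element is \(x_0\).

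Otherwise \(x_0\equiv y_0\equiv\Delta_m\pmod{2^m}\). Write \(y_0=\Delta_m+2^mt\); then \(x_0=\Delta_m+2^m(t+cL)\) because \(2^a=2^m2^s=2^mL\). Apply the reindexing \(z\mapsto(z-\Delta_m)/2^m\), which is injective on the residue class of \(\Delta_m\). It sends:
\begin{itemize}
\item \(\Delta_m\) to \(0\);
\item \(\eta_m\) to \(-d_m\);
\item \(x_0\) and \(y_0\) to \(t+cL\) and \(t\).
\end{itemize}
Subtracting the known \(\atom0\) and writing the plus signs as minus signs, which is harmless in \(\F_2\), gives \eqref{eq:w1-binary-signature}. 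Finally, \(d_m\in\{0,1,q-1,q\}=\{0,1,LW,LW+1\}\) because \(q-1=LW\).

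\textbf{Main obstacle.} There is no genuine obstacle here; the one delicate point is the bookkeeping of Step 2. The coefficients \(2^iw\) must be tracked exactly, to confirm that precisely the top correction survives modulo \(2\). One must also check that the subtracted term \(\atom0\) is genuinely known and does not collide with the other terms before reduction. Since \(\Delta_m\) is known, it is.
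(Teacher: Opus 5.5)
Your proof is correct and follows the paper's argument step for step. The recursion of Lemma~\ref{lem:w1-delayed} stops at \(j=s-1\), reduction modulo \(2\) kills the corrections with coefficient \(2^iw\) for \(i\ge1\), and the residue-class split modulo \(2^m\) followed by the reindexing \(z\mapsto(z-\Delta_m)/2^m\) yields \eqref{eq:w1-binary-signature}.

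One refinement: since \(x_0\) is unknown, state the first case as keeping the terms whose indices are \emph{not} congruent to \(\Delta_m\pmod{2^m}\), as the paper's proof does. This also shows that the spectrum itself decides which of the two cases occurs.
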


\begin{proof}
Since \(s<a\), Lemma~\ref{lem:w1-delayed}, with \(p=2\), reaches
\(j=s-1\) and determines \((K_{s-1},\Delta_m)\). Setting \(j=s-1\) in
\eqref{eq:w1-delayed} and reducing modulo \(2\) eliminates all terms with
\(i\ge1\). Since \(W=w\equiv1\pmod2\), equations
\eqref{eq:w1-clean-congruence}, \eqref{eq:w1-Qrc}, and
\eqref{eq:w1-known-top-relations} give \eqref{eq:w1-critical-binary}. By \eqref{eq:w1-level-relations},
\(\eta_m=\Delta_m-2^md_m\), where
\(d_m\in\{0,1,q-1,q\}\); moreover,
\eqref{eq:w1-known-top-relations} gives
\(x_0-y_0=c2^a=c2^mL\).

If \(c=0\), no recovery of \(y_0\) is required. Assume \(c=\pm1\).
Since \(\eta_m\equiv\Delta_m\pmod{2^m}\) and
\(x_0\equiv y_0\pmod{2^m}\), the first two terms in
\eqref{eq:w1-critical-binary} are indexed by integers congruent to
\(\Delta_m\pmod{2^m}\), while the last two are indexed by integers congruent
to \(x_0\pmod{2^m}\). If a nonzero coefficient occurs at an index not
congruent to \(\Delta_m\), then the two such indices are \(x_0\) and \(y_0\),
so \(\{x_0,y_0\}\) is determined; the relation \(x_0-y_0=c2^a\) determines
their ordering.

Otherwise \(x_0\equiv y_0\equiv\Delta_m\pmod{2^m}\). Write
\(y_0=\Delta_m+2^mt\). By \eqref{eq:w1-known-top-relations},
\(x_0=\Delta_m+2^m(t+cL)\). Replace every index \(z\) in
\eqref{eq:w1-critical-binary} by \((z-\Delta_m)/2^m\), subtract the known
term \(\atom0\), and write \(d:=d_m\). The resulting measure is
\eqref{eq:w1-binary-signature}.
\end{proof}

\begin{lemma}\label{lem:w1-binary-rigidity}
Let \(s\ge1\), put \(L=2^s\), and let \(W\) be a positive odd integer. Fix
\(c\in\{-1,1\}\). Then the map
\[
(d,t)\longmapsto \sigma_{d,t},
\qquad d\in\{0,1,LW,LW+1\},\quad t\in\Z,
\]
defined by \eqref{eq:w1-binary-signature} is injective.
\end{lemma}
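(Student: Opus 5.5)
The plan is to copy the prefix-sum argument of Lemma~\ref{lem:w1-odd-rigidity}, now working over \(\F_2\). In \(\F_2\) all signs can be ignored, so the coefficient information used in the odd case is gone. What replaces it is parity and contiguity of supports, using that \(L=2^s\) is even and \(W\) is odd.

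Suppose \(\sigma_{d_1,t_1}=\sigma_{d_2,t_2}\) in \(\F_2\). As in Lemma~\ref{lem:w1-odd-rigidity}, let \(F_\theta=\mathbf 1_{\{x\ge\theta\}}\), now with values in \(\F_2\). Let \(I_i\) be the length-\(L\) interval \(\{t_i,\dots,t_i+L-1\}\) if \(c=1\), and \(\{t_i-L,\dots,t_i-1\}\) if \(c=-1\). Taking prefix sums of \(\atom{t+cL}-\atom t\) gives \(\pm\mathbf 1_{I_i}\), which equals \(\mathbf 1_{I_i}\) in \(\F_2\). Taking prefix sums of \(-\atom{-d}\) gives \(F_{-d}\). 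Hence
\[
F_{-d_1}+F_{-d_2}=\mathbf 1_{I_1}+\mathbf 1_{I_2}\quad\text{in }\F_2 .
\]

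I would then compare the two sides.

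\emph{Left-hand side.} If \(d_1\neq d_2\), it is the indicator of a single contiguous interval of length \(|d_1-d_2|\). Since \(d_i\in\{0,1,LW,LW+1\}\), this length lies in \(\{1,LW-1,LW,LW+1\}\).

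\emph{Right-hand side.} Let \(k\) be the distance between the starting points of \(I_1\) and \(I_2\). The right-hand side is the indicator of the symmetric difference \(I_1\triangle I_2\), whose size is \(2\min(k,L)\), which is even. I would also record its shape:
\begin{itemize}
\item If \(0<k<L\), it is two blocks of length \(k\) separated by the overlap of length \(L-k>0\), so it is not contiguous.
\item If \(k>L\), it is two blocks of length \(L\) separated by a gap, so again it is not contiguous.
\item If \(k=L\), it is a single contiguous interval of length \(2L\).
\end{itemize}

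Now assume \(d_1\neq d_2\) and derive a contradiction.
\begin{itemize}
\item Because \(L\) is even, \(LW\) is even, so the lengths \(1\), \(LW-1\), \(LW+1\) are odd. Each is impossible, since the right-hand side has even support.
\item The remaining length \(LW\) is even and nonzero. A nonempty contiguous support on the right forces \(k=L\), hence \(LW=2L\), that is \(W=2\). This contradicts \(W\) odd.
\end{itemize}
Therefore \(d_1=d_2\). Then the left-hand side vanishes, so \(I_1\triangle I_2=\varnothing\) and \(I_1=I_2\). Since \(c\) is fixed, the starting points agree and \(t_1=t_2\).

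The main point to handle carefully is the structural claim that \(I_1\triangle I_2\) is contiguous only when \(k=L\). In the odd case this step was instead settled by the coefficients \(cW\) and \(-cW\); over \(\F_2\) those coincide and cannot be used. That is exactly why the hypothesis that \(W\) is odd is needed, in place of the odd-case argument.
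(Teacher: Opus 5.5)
Your proof is correct and follows essentially the same route as the paper. Prefix sums over \(\F_2\) reduce the identity to \(F_{-d_1}+F_{-d_2}=\mathbf 1_{I_1\triangle I_2}\). Contiguity of the left side then forces the two length-\(L\) intervals to be adjacent, so the support has size \(2L\), which is incompatible with \(|d_1-d_2|\in\{1,LW-1,LW,LW+1\}\) because \(L\) is even and \(W\) is odd. The only difference is ordering: you rule out the odd lengths by parity before using contiguity, while the paper uses contiguity first to get \(|d_1-d_2|=2L\) and then checks the four values.
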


\begin{proof}
Suppose that \(\sigma_{d_1,t_1}=\sigma_{d_2,t_2}\). By \eqref{eq:w1-binary-signature},
\[
-\atom{-d_1}+\atom{t_1+cL}-\atom{t_1}
=
-\atom{-d_2}+\atom{t_2+cL}-\atom{t_2}.
\]
Use the definitions of \(F_\theta\) and \(I_i\) from the proof of Lemma~\ref{lem:w1-odd-rigidity}, with \(F_\theta\) now taking values in \(\F_2\). Taking prefix sums gives
\begin{equation}\label{eq:w1-binary-prefix}
F_{-d_2}-F_{-d_1}
=
c\bigl(\mathbf 1_{I_1}-\mathbf 1_{I_2}\bigr).
\end{equation}
Since addition and subtraction coincide in \(\F_2\), and \(c\in\{-1,1\}\), the right-hand side is \(\mathbf 1_{I_1\triangle I_2}\).

Suppose first that \(d_1\ne d_2\). The left-hand side of \eqref{eq:w1-binary-prefix} is supported on the integer interval between the two jump points and therefore has support size \(|d_1-d_2|\). Since \(d_1,d_2\in\{0,1,LW,LW+1\}\), the possible nonzero values are \(1\), \(LW-1\), \(LW\), and \(LW+1\).

On the other hand, \eqref{eq:w1-binary-prefix} implies that \(I_1\triangle I_2\) is a single integer interval. For two distinct integer intervals of the same length \(L\), this is possible only when they are disjoint and adjacent, in which case \(|I_1\triangle I_2|=2L\). Hence \(|d_1-d_2|=2L\).

Since \(L=2^s\) is even and \(W\) is odd, the values \(1\), \(LW-1\), and \(LW+1\) are odd and therefore cannot equal \(2L\). The remaining possibility \(LW=2L\) would imply \(W=2\), contradicting that \(W\) is odd. Thus \(d_1=d_2\).

With \(d_1=d_2\), \eqref{eq:w1-binary-prefix} gives \(\mathbf 1_{I_1\triangle I_2}=0\), so \(I_1=I_2\). Since \(c\) is fixed, equality of these intervals implies \(t_1=t_2\). Hence \((d_1,t_1)=(d_2,t_2)\), proving injectivity.
\end{proof}

\begin{lemma}\label{lem:w1-binary-bottom-recovery}
Assume that \(s\ge a\) and \(c\in\{-1,1\}\). Then the ordinary spectrum determines \(y_0\).
\end{lemma}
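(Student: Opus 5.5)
The plan is to run the recursion of Lemma~\ref{lem:w1-delayed} all the way down to the bottom level \(r=0\), where the graph has prime order \(q\) and its measure can be written down explicitly. Since \(s\ge a\), we have \(J=\min\{s-1,a-1\}=a-1\). By the \(p=2\) discussion preceding Lemma~\ref{lem:w1-binary-critical}, that lemma applies with \(p=2\), so the ordinary spectrum determines the pair \((K_{a-1},\Delta_0)\). Setting \(j=a-1\) in \eqref{eq:w1-delayed} gives
\[
K_{a-1}=\M_0+\sum_{i=0}^{a-1}2^{\,s-a+i}\,w\,\Q_{a-i,c_{a-i}}(y_{a-i,0}).
\]
The top-level correction \(i=0\) has the smallest power, \(2^{s-a}\). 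Every other correction carries at least one further factor of \(2\).

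The first step is to recover \(\M_0\). We have \(\mathcal D_0\subseteq\Div^*(q)=\{1\}\), so \(\ICG(q,\mathcal D_0)\) is either the empty graph or \(K_q\). In the first case \(\M_0=q\atom0\); in the second, \(\M_0=\atom{q-1}+(q-1)\atom{-1}\). These two cases are told apart by the known value \(\Delta_0\in\{0,q-1\}\), because \(q-1\ne0\). Alternatively, Lemma~\ref{lem:gen-prime-power} with \(r=1\) applies directly. Thus \(\M_0\) is known, and we can form
\[
R:=K_{a-1}-\M_0=2^{s-a}w\Bigl(\Q_{a,c}(y_0)+2\sum_{i=1}^{a-1}2^{\,i-1}\Q_{a-i,c_{a-i}}(y_{a-i,0})\Bigr).
\]
This shows that \(R\) is coefficientwise divisible by the known integer \(2^{s-a}w\); note that \(s\) and \(w\) are determined by \(q\). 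Hence the quotient \(R':=R/(2^{s-a}w)\) is a well-defined, known element of \(\mathcal A\).

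Reducing modulo \(2\) and using \eqref{eq:w1-Qrc} with \(p=2\), namely \(\Q_{a,c}(y)=\atom{y+c2^a}+\atom y-2\atom{y+c2^{a-1}}\), we obtain
\[
R'\equiv\atom{x_0}+\atom{y_0}\pmod 2 .
\]
Here \(x_0=y_0+c2^a\), by \eqref{eq:w1-known-top-relations}. Since \(c=\pm1\), the integers \(x_0\) and \(y_0\) are distinct, so the support of \(R'\bmod 2\) is exactly the unordered pair \(\{x_0,y_0\}\). The ordering is fixed by the known sign \(c\): the relation \(x_0-y_0=c2^a\) identifies \(y_0\) as the smaller element when \(c=1\) and the larger when \(c=-1\). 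This determines \(y_0\).

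The step I expect to need the most care is the bookkeeping in \eqref{eq:w1-delayed} at \(j=a-1\). It must be checked that the exponent attached to level \(a-i\) is \(s-1-(a-1-i)=s-a+i\), so that the minimal \(2\)-power belongs to the top correction alone. It must also be checked that the recursion is legitimately continued down to \(r=0\): the last step uses \(r=1\), and \(J\le a-1\) guarantees \(r\ge1\) at every step. Once this is in place, the argument is a single exact division followed by a parity reading, with no interval-rigidity argument of the kind used in Lemma~\ref{lem:w1-binary-rigidity}.
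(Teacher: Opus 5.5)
Your proposal is correct and follows the paper's proof essentially step for step. You run Lemma~\ref{lem:w1-delayed} down to \((K_{a-1},\Delta_0)\), identify \(\M_0\) from \(\Delta_0\), strip off the factor \(2^{s-a}\), read \(\{x_0,y_0\}\) from the reduction modulo \(2\), and order the pair using the known sign \(c\). The differences are cosmetic: you treat \(s=a\) and \(s>a\) uniformly by subtracting \(\M_0\) exactly and also dividing by the odd known factor \(w\), where the paper splits the two cases. Your aside citing Lemma~\ref{lem:gen-prime-power} is justified only through its remark that the largest eigenvalue alone determines the divisor set, since the full spectrum of the level-zero graph is not available; your main argument via \(\Delta_0\) does not need that aside.
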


\begin{proof}
Since \(s\ge a\), Lemma~\ref{lem:w1-delayed} can be applied through all
\(a-1\) steps and reaches \(r=0\). Thus \((K_{a-1},\Delta_0)\) is known and
\begin{equation}\label{eq:w1-binary-bottom}
K_{a-1}
=
\M_0
+
\sum_{i=0}^{a-1}
2^{\,s-a+i}w
\Q_{a-i,c_{a-i}}(y_{a-i,0}).
\end{equation}

Since \(\mathcal D_0\subseteq\Div^*(q)=\{1\}\), the level-zero graph
is either edgeless or complete. Hence
\[
\M_0=q\atom0,\quad \Delta_0=0,
\qquad\text{or}\qquad
\M_0=\atom{q-1}+(q-1)\atom{-1},\quad \Delta_0=q-1.
\]
Thus the known value \(\Delta_0\) determines \(\M_0\).

Suppose first that \(s=a\). Since \(q\) is odd, in either case
\(\M_0\equiv\atom{\Delta_0}\pmod2\). Subtracting \(\atom{\Delta_0}\) from \eqref{eq:w1-binary-bottom} and
reducing modulo \(2\) leaves only the \(i=0\) term. Since
\(w\equiv1\pmod2\), we obtain
\begin{equation*}
K_{a-1}-\atom{\Delta_0}
\equiv
\Q_{a,c}(y_0)
\equiv
\atom{x_0}+\atom{y_0}
\pmod2.
\end{equation*}

Now suppose that \(s>a\). By \eqref{eq:w1-binary-bottom}, every
coefficient of \(K_{a-1}-\M_0\) is divisible by \(2^{s-a}\). Hence the
coefficientwise quotient is well defined and equals
\[
\frac{K_{a-1}-\M_0}{2^{s-a}}
=
\sum_{i=0}^{a-1}
2^iw
\Q_{a-i,c_{a-i}}(y_{a-i,0}).
\]
Reducing modulo \(2\) again leaves only the \(i=0\) term, and hence
\begin{equation*}
\frac{K_{a-1}-\M_0}{2^{s-a}}
\equiv
\Q_{a,c}(y_0)
\equiv
\atom{x_0}+\atom{y_0}
\pmod2.
\end{equation*}

Thus in either case the reduction determines the unordered pair
\(\{x_0,y_0\}\). Since \(T\) has already determined \(c\)
and \(x_0-y_0=c2^a\), the ordering of the pair is uniquely determined and \(y_0\) is
recovered.
\end{proof}

\begin{proposition}\label{prop:w1-binary}
For every odd prime \(q\) and every \(a\ge1\), the ordinary spectrum of a connected \(\ICG(2^aq,\mathcal D)\) uniquely determines \(\M_H\).
\end{proposition}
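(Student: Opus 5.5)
The plan is to assemble the pieces already established for \(p=2\). By Lemma~\ref{lem:w1-top-state}, the ordinary spectrum of the connected graph \(G=\ICG(2^aq,\mathcal D)\) determines \(T\), and hence \(c\) and \(d\). Since \(\Delta\) is known, \(y_1=\Delta-2^ad\) and \(\Q_d(y_1)\) are known, and so is \(\Delta_{a-1}\). Therefore the quantity \(K\) in \eqref{eq:w1-K} is determined, and it satisfies \(\M_H=K-2^{s-1}w\,\Q_{a,c}(y_0)\). If \(c=0\), then \(\Q_{a,0}(y_0)=0\), exactly as in the proof of Proposition~\ref{prop:w1-odd-delayed}, so \(\M_H=K\) and we are done. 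So assume \(c\in\{-1,1\}\). It then suffices to recover \(y_0\), because \(x_0=y_0+c2^a\) and the correction term \(\Q_{a,c}(y_0)\) are then known.

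We split according to \(s=v_2(q-1)\). If \(s\ge a\), Lemma~\ref{lem:w1-binary-bottom-recovery} directly gives \(y_0\).

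If \(s<a\), Lemma~\ref{lem:w1-binary-critical} applies with \(m=a-s\), \(L=2^s\) and \(W=w\). Its hypotheses hold because the recursion of Lemma~\ref{lem:w1-delayed} remains valid for \(p=2\), as remarked before that lemma. The known pair \((K_{s-1},\Delta_m)\) then gives two cases.

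\begin{itemize}
\item If \(x_0\not\equiv\Delta_m\pmod{2^m}\), the lemma yields \(\{x_0,y_0\}\), and the sign \(c\) orders the pair. To decide which case occurs from the data, I would check whether \(K_{s-1}\bmod 2\) has a nonzero coefficient at an index not congruent to \(\Delta_m\) modulo \(2^m\).
\item Otherwise, the normalized reduction is \(\sigma_{d,t}\) with \(d=d_m\in\{0,1,LW,LW+1\}\). Lemma~\ref{lem:w1-binary-rigidity} (\(W\) odd, \(c\) fixed and known) shows that \((d,t)\) is determined by \(\sigma_{d,t}\), hence \(y_0=\Delta_m+2^mt\).
\end{itemize}

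The main subtlety is the dichotomy test in the second case, so it needs a careful check. Suppose \(x_0\not\equiv\Delta_m\). The only terms off the class of \(\Delta_m\) are \(\atom{x_0}+\atom{y_0}\), and these are distinct since \(c\ne0\), so they cannot cancel mod \(2\). Hence some coefficient off the class is nonzero.

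Conversely, suppose \(x_0\equiv\Delta_m\). Then every index lies in the class of \(\Delta_m\), so no nonzero coefficient appears off it. The test therefore correctly identifies which case we are in.

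In every case \(y_0\) is determined, so \(\Q_{a,c}(y_0)\) is known and \(\M_H=K-2^{s-1}w\,\Q_{a,c}(y_0)\) is recovered from the ordinary spectrum.
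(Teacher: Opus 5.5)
Your proposal is correct and follows the paper's proof step for step. You dispose of \(c=0\) via \(\Q_{a,0}(y_0)=0\), recover \(y_0\) from Lemma~\ref{lem:w1-binary-bottom-recovery} when \(s\ge a\) and from Lemmas~\ref{lem:w1-binary-critical} and~\ref{lem:w1-binary-rigidity} when \(s<a\), and read off \(\M_H=K-2^{s-1}w\,\Q_{a,c}(y_0)\) from \eqref{eq:w1-K}; your explicit check that a nonzero coefficient of \(K_{s-1}\bmod 2\) off the class of \(\Delta_m\) modulo \(2^m\) occurs exactly when \(x_0\not\equiv\Delta_m\) is a useful detail the paper leaves implicit.
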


\begin{proof}
If \(c=0\), then \(\Q_{a,c}(y_0)=0\) by \eqref{eq:w1-Qrc}, so \eqref{eq:w1-K} gives \(\M_H=K\). Assume \(c=\pm1\).

If \(s<a\), put \(m=a-s\). Lemma~\ref{lem:w1-binary-critical} recovers \(x_0,y_0\) directly when
\(x_0\not\equiv\Delta_m\pmod{2^m}\). When
\(x_0\equiv y_0\equiv\Delta_m\pmod{2^m}\),
Lemma~\ref{lem:w1-binary-rigidity} determines \(t\), and hence
\(y_0=\Delta_m+2^mt\). If \(s\ge a\),
Lemma~\ref{lem:w1-binary-bottom-recovery} determines \(y_0\). Thus
\(y_0\) is determined in all cases.

Returning to \eqref{eq:w1-K}, we obtain
\[
\M_H=K-2^{s-1}w\Q_{a,c}(y_0).
\]
\end{proof}

\begin{theorem}\label{thm:w1-recovery}
Let \(G=\ICG(p^aq,\mathcal D)\), where \(p<q\) are primes and \(a\ge1\). If \(G\) is connected, then
its ordinary spectrum determines \(T\) and the exact spectral counting measure \(\M_H\) of \(H\). No connectivity assumption is imposed on \(H\), and its divisor set may be empty.
\end{theorem}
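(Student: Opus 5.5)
The plan is to assemble Theorem~\ref{thm:w1-recovery} from the case results already established, splitting according to the parity of \(p\) and the valuation \(s=v_p(q-1)\). The first step applies to every case: since \(G\) is connected, Lemma~\ref{lem:w1-top-state} shows that the ordinary spectrum determines \(T=\mathcal D\cap\{1,q\}\). Hence \(c\), \(d\), \(\Delta\), and \(y_1=\Delta-Pd\) are known, and so is the correction \(\Q_d(y_1)\) in \eqref{eq:w1-special-split}. After this, the only remaining unknown needed to isolate \(\M_H\) is the term \((q-1)\Q_c(y_0)\). If \(c=0\), this term vanishes.

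Next comes the case analysis for \(\M_H\).
\begin{itemize}
\item \textbf{\(p\) odd and \(p\nmid(q-1)\).} Proposition~\ref{prop:w1-immediate} gives \(\M_H\) directly.
\item \textbf{\(p\) odd and \(1\le s<a\).} Proposition~\ref{prop:w1-odd-delayed} gives \(\M_H\).
\item \textbf{\(p=2\).} Proposition~\ref{prop:w1-binary} covers all \(a\) and all values of \(s\).
\item \textbf{\(p\) odd and \(s\ge a\).} Lemma~\ref{lem:w1-large-valuation} says that the spectrum determines \(\mathcal D\) itself, via the Li--Liu result. Then \(\mathcal E\) is determined by the unique split \(\mathcal D=p\mathcal E\mathbin{\dot\cup}T\). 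Consequently \(H=\ICG(p^{a-1}q,\mathcal E)\) is a determined graph, and its counting measure \(\M_H\) is determined as well. In this case \(T\) also follows trivially.
\end{itemize}
These four cases exhaust all possibilities, because when \(p\) is odd, either \(p\nmid(q-1)\) or \(s\ge1\), and then either \(s<a\) or \(s\ge a\).

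It remains to check the side claims in the statement. None of the cited results assumes that \(H\) is connected or that \(\mathcal E\) is nonempty. The recursion in Lemma~\ref{lem:w1-delayed} explicitly allows \(T_r=\varnothing\), that is, \(d_r=0\). If \(\mathcal E=\varnothing\), then \(\M_H=p^{a-1}q\atom0\), and this is produced by the same formulas, since \(\M_H\) is obtained from exact identities and the ordinary divisions by \(p\) stay valid.

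I expect no real obstacle, because the theorem is a bookkeeping assembly of earlier results. The one point to check carefully is the case \(p\) odd with \(s\ge a\), since there the route goes through recovering all of \(\mathcal D\) rather than through the measure identity. I also need to confirm that the hypothesis \(p^a<q\) of the cited Li--Liu theorem holds there; Lemma~\ref{lem:w1-large-valuation} already verifies this.
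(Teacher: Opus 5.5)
Your proposal is correct and follows the paper's proof essentially verbatim. You first recover \(T\) by Lemma~\ref{lem:w1-top-state}, then use the same exhaustive case split to obtain \(\M_H\), citing the same results:
\begin{itemize}
\item odd \(p\) with \(p\nmid(q-1)\), by Proposition~\ref{prop:w1-immediate};
\item odd \(p\) with \(1\le s<a\), by Proposition~\ref{prop:w1-odd-delayed};
\item odd \(p\) with \(s\ge a\), where Lemma~\ref{lem:w1-large-valuation} recovers \(\mathcal D\) and hence \(\mathcal E\);
\item \(p=2\), by Proposition~\ref{prop:w1-binary}.
\end{itemize}
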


\begin{proof}
Lemma~\ref{lem:w1-top-state} determines \(T\).

Suppose first that \(p\) is odd. If \(p\nmid(q-1)\),
Proposition~\ref{prop:w1-immediate} determines \(\M_H\). If
\(p\mid(q-1)\) and \(v_p(q-1)\ge a\),
Lemma~\ref{lem:w1-large-valuation} gives the stronger direct
reconstruction of \(\mathcal D\). Since \(T\) is already known, this
also determines \(\mathcal E\), and hence \(\M_H\). If
\(1\le v_p(q-1)<a\),
Proposition~\ref{prop:w1-odd-delayed} determines \(\M_H\).

For \(p=2\), Proposition~\ref{prop:w1-binary} determines \(\M_H\)
for every \(a\ge1\). These cases exhaust all possibilities.

\end{proof}

\section{Induction and proof of the main theorem}

Theorem~\ref{thm:w1-recovery} requires only \(G\) to be connected; \(H\)
may be disconnected, and its divisor set may be empty. Accordingly, the induction below is formulated for all
divisor sets.

\begin{proof}[Proof of Theorem~\ref{thm:main}]
Fix primes \(p<q\). For each integer \(r\ge0\), let \(\mathcal P(r)\) denote
the statement that, for every divisor set
\(\mathcal D'\subseteq\Div^*(p^rq)\), the ordinary
spectrum of \(\ICG(p^rq,\mathcal D')\) uniquely determines
\(\mathcal D'\). We prove \(\mathcal P(a)\) by strong induction on \(a\).

The base case \(\mathcal P(0)\) concerns integral circulant graphs of order \(q\), and follows
from Lemma~\ref{lem:gen-prime-power}. Now let \(a\ge1\), and assume
that \(\mathcal P(r)\) holds for every \(0\le r<a\). Thus the induction hypothesis applies to every integral circulant graph of order \(p^rq\)
with \(r<a\), regardless of connectivity or whether its divisor set is
empty.

Let \(G=\ICG(p^aq,\mathcal D)\), where
\(\mathcal D\subseteq\Div^*(p^aq)\), and suppose that
its ordinary spectrum is known. If the entire spectrum is zero, then the real symmetric
adjacency matrix of \(G\) has only the eigenvalue \(0\), and hence is
the zero matrix. Therefore \(\mathcal D=\varnothing\).

Assume now that \(\mathcal D\neq\varnothing\), and put
\(g=\gcd(\mathcal D)=p^iq^j\), where
\(0\le i\le a\) and \(j\in\{0,1\}\).
If \(g>1\), Lemma~\ref{lem:gen-components} shows that \(G\) is the
disjoint union of \(g\) isomorphic connected components, each
isomorphic to
\[
G_0=\ICG(p^{a-i}q^{1-j},\mathcal D/g).
\]
The common degree is a simple eigenvalue of each connected component, so its
multiplicity in \(G\) is exactly \(g\). Hence the ordinary spectrum
determines \(g\), and division of every eigenvalue multiplicity by
\(g\) recovers the ordinary spectrum of \(G_0\).

If \(j=0\), then \(i\ge1\) and \(G_0\) has order
\(p^{a-i}q\), with strictly smaller \(p\)-exponent; hence the strong
induction hypothesis \(\mathcal P(a-i)\) recovers \(\mathcal D/g\). If
\(j=1\), then \(i<a\), since every element of \(\mathcal D\) is a
proper divisor of \(p^aq\). Hence \(a-i\ge1\), and \(G_0\) has
prime-power order \(p^{a-i}\). Lemma~\ref{lem:gen-prime-power}
therefore recovers \(\mathcal D/g\).
In either case, multiplication by the known integer \(g\) recovers
\(\mathcal D\).

It remains to consider \(g=1\). By
Lemma~\ref{lem:gen-components}, \(G\) is then connected. Write
\[
\mathcal D=p\mathcal E\mathbin{\dot\cup}T,
\qquad
H=\ICG(p^{a-1}q,\mathcal E).
\]
By Theorem~\ref{thm:w1-recovery}, the ordinary spectrum of \(G\)
determines \(T\) and the exact spectral counting measure \(\M_H\) of \(H\).
By definition, \(\M_H\) records the numerical eigenvalues of \(H\) with
their multiplicities and thus determines its ordinary spectrum.

If \(a=1\), then \(H\) has prime order \(q\), so Lemma~\ref{lem:gen-prime-power} recovers \(\mathcal E\). If \(a\ge2\), then \(H\) has order \(p^{a-1}q\), so the strong induction hypothesis \(\mathcal P(a-1)\) recovers the divisor set \(\mathcal E\). Thus \(\mathcal E\) is uniquely determined in all cases, and
finally
\[
\mathcal D=p\mathcal E\mathbin{\dot\cup}T.
\]
Hence the ordinary spectrum uniquely determines \(\mathcal D\).
\end{proof}

\section*{Acknowledgements}
This work was supported by the National Natural Science Foundation of China under Grant No.~12071351.


\begin{thebibliography}{99}

\bibitem{AltinisikAydin2024}
E. Alt{\i}n{\i}{\c{s}}{\i}k and S. B. Ayd{\i}n,
Integral circulant graphs and So's conjecture,
\emph{Turkish J. Math. Comput. Sci.} \textbf{16} (2024), no.~1, 169--176.

\bibitem{Apostol1976}
T. M. Apostol,
\emph{Introduction to Analytic Number Theory},
Undergraduate Texts in Mathematics,
Springer-Verlag, New York, 1976.

\bibitem{Cusanza2006}
C. F. Cusanza,
\emph{Integral circulant graphs with the spectral {\'A}d{\'a}m property},
Master's thesis, San Jos\'e State University, 2006.

\bibitem{KlinKovacs2012}
M. Klin and I. Kov\'acs,
Automorphism groups of rational circulant graphs,
\emph{Electron. J. Combin.} \textbf{19} (2012), no.~1, Paper~35.

\bibitem{LiLiu2024}
H. Li and X. Liu,
So's conjecture for integral circulant graphs of 4 types,
\emph{Discrete Math.} \textbf{347} (2024), no.~7, 114044.

\bibitem{MoniusSo2023}
K. M\"onius and W. So,
How many non-isospectral integral circulant graphs are there?,
\emph{Australas. J. Combin.} \textbf{86} (2023), no.~2, 320--335.

\bibitem{Sander2018}
J. W. Sander,
Structural properties and formulae of the spectra of integral circulant graphs,
\emph{Acta Arith.} \textbf{184} (2018), no.~3, 297--315.

\bibitem{SanderSander2015}
J. W. Sander and T. Sander,
On So's conjecture for integral circulant graphs,
\emph{Appl. Anal. Discrete Math.} \textbf{9} (2015), no.~1, 59--72.

\bibitem{SchlagePuchta2021}
J.-C. Schlage-Puchta,
A determinant involving Ramanujan sums and So's conjecture,
\emph{Arch. Math.} \textbf{117} (2021), no.~4, 379--384.

\bibitem{So2006}
W. So,
Integral circulant graphs,
\emph{Discrete Math.} \textbf{306} (2006), no.~1, 153--158.

\bibitem{Zhang2023}
Y. X. Zhang,
On isospectral integral circulant graphs,
arXiv:2310.11545, 2023.


\end{thebibliography}
\end{document}